\renewcommand\labelenumi{(\roman{enumi})}
\renewcommand\theenumi\labelenumi
\newcommand{\ignore}[1]{}
\newtheorem{definition}{Definition}
\newtheorem{theorem}{Theorem}
\newtheorem{remark}{Remark}
\newtheorem{lemma}{Lemma}
\newtheorem{example}{Example}
\DeclareMathOperator{\supp}{supp}
\newcommand{\C}{\mathbb{C}}
\newcommand{\N}{\mathbb{N}}
\newcommand{\R}{\mathbb{R}}
\newcommand{\Z}{\mathbb{Z}}
\newcommand{\eps}{\varepsilon}
\newcommand{\id}{\mathsf{id}}
\DeclareFontFamily{U}{mathx}{}
\DeclareFontShape{U}{mathx}{m}{n}{<-> mathx10}{}
\DeclareSymbolFont{mathx}{U}{mathx}{m}{n}
\DeclareMathAccent{\widecheck}{0}{mathx}{"71}
\title[Schauder estimates for germs by scaling]{Schauder estimates for germs by scaling} 
\author{Jonas~Sauer}
\address{Jonas~Sauer\newline Friedrich-Schiller-Universit\"at Jena \newline Ernst-Abbe-Platz 2, 07737 Jena, Germany}
\email{jonas.sauer@uni-jena.de}
\author{Scott~Andrew~Smith}
\address{Scott~Andrew~Smith\newline Academy of Mathematics and System Sciences, Chinese Academy of Sciences \newline No. 55 Zhongguancun East Road, Beijing, China 100190 }
\email{ssmith@amss.ac.cn}
\begin{document}

\begin{abstract}
In this expository note, we show that the blow-up arguments of L. Simon adapt well to the corresponding Schauder theory of germs used in the study of singular SPDEs.  We illustrate this through some representative examples.  As in the classical PDE framework, the argument relies only on the scaling properties of the  germ semi-norms and the Liouville principle for the  operator.
\end{abstract}

\dedicatory{(In memory of Giuseppe Da Prato.)}

\maketitle
\tableofcontents

\section{Introduction}

Recent work on pathwise approaches to singular parabolic SPDEs have two central analytic ingredients, known as `reconstruction' and `integration' in the terminology of rough paths, regularity structures, and paracontrolled calculus (\cite{friz2020course}, \cite{GIP15}, \cite{Hai14}).  These tools can be viewed as methods for estimating the nonlinearity in the SPDE and the solution itself, respectively.  For example, in pathwise approaches to the multiplicative stochastic heat equation
\begin{equation}
(\partial_{t}-\Delta)u = \sigma(u)\xi \label{SPDE},  
\end{equation}
cf. \cite{HaP15},  \cite{OtW19}, \cite{chandra2024priori}, the reconstruction theorem is used to estimate the product $\sigma(u)\xi$, while the integration theorem exploits the regularizing properties of $(\partial_{t}-\Delta)^{-1}$ to estimate $u$.  In fact, typically the strategy is not to estimate either of these quantities directly, but rather the remainder that arises from re-centering them around one or more explicit functions (or distributions) which are amenable to direct calculations, such as multi-linear functionals of the noise $\xi$.  For an equation with additive noise like
\begin{equation}
(\partial_{t}-\Delta)u=-u^{3}+\xi,  
\end{equation}
the Da-Prato/Debussche \cite{da2003strong} trick amounts to considering a remainder of the form $u-v$, where $(\partial_{t}-\Delta)v=\xi$.  Even with $\xi$ being a space-time white noise in dimension 1, this yields a nice insight that despite both $u$ and $v$ individually belonging to $C^{1/2-}$, the difference $u-v$ belongs to $C^{5/2-}$, both H\"{o}lder spaces being understood with respect to the parabolic metric.  It is not immediately apparent what the analogous statement would be for \eqref{SPDE}, as a simple subtraction of $v$ amounts only to a change of the non-linearity to $(\sigma(u)-1)\xi$, which does not generally provide a cancellation.  The theory of regularity structures resolved this question as well as a closely related one, the  generalization of the classical Wong-Zakai theorem from ODE's to \eqref{SPDE}, cf. \cite{HaP15}.

\medskip

Writing down a single explicit equation for the remainder is often not possible or natural for many SPDEs of interest.  The basic idea in the rough path/regularity structures approach to singular SPDEs  is to instead consider a remainder depending on a base-point, so there is actually a `family of remainders' each satisfying a slightly different equation.  For example, freezing in the diffusion coefficient $\sigma(u)$ near a space-time point $z=(x,s)$ leads to 
\begin{equation}
(\partial_{t}-\Delta) \big(u-\sigma(u)(z)v-P_{z} \big)=\big (\sigma(u)-\sigma(u)(z)  \big )\xi \nonumber,   
\end{equation}
where $P_{z}$ is a polynomial in the kernel of $(\partial_{t}-\Delta)$.
One can then try to analyze the family of functions $(U_{z})_{z}$, where $U_{z}:=u-\sigma(u)(z)v-P_{z}$, which we refer to in the present paper as a germ, borrowing the terminology of \cite{broux2024hairer,CaZ20}.
A typical goal is to show that despite $u$ and $v$ being individually only $\alpha$-H\"{o}lder continuous, for a suitable choice of $P_{z}$, it holds that $U_{z}(w)$ vanishes to order $\eta>\alpha$ as the parabolic distance between $z$ and $w$ approaches zero.
This is accomplished with the Schauder estimate for germs.

\medskip

The `integration' theorem of \cite{otto2018parabolic}, inspired by the integration theorem for modelled distributions \cite{Hai14} and proved based on the Safonov approach as in \cite{OtW19}, is a generalized Schauder estimate which applies naturally to such germs.
In the present work, we show that various Schauder type estimates similar to the ones obtained in \cite{otto2018parabolic} and \cite{OSSW25} can alternatively be proved by the indirect blow-up arguments of \cite{simon1997schauder}.
The approach of \cite{simon1997schauder} is known to be rather useful and robust in classical PDE theory.
We note in advance that Hairer's seminal work \cite{Hai14} already cites \cite{simon1997schauder}, emphasizing in particular the importance of scaling, then uses a direct argument based on a kernel decomposition to prove a very general Schauder estimate for modelled distributions.
A closely related Schauder estimate for germs was also proved in \cite{broux2024hairer}.
Finally, we mention another kernel based approach used in \cite{LOTT21}. 
The goal of the present work is rather to illustrate that following more closely the indirect argument  of \cite{simon1997schauder} yields another accessible
proof\footnote{Of course with the drawback that indirect arguments lead to non-explicit constants in the estimates.} of some related Schauder estimates for germs, which appears to be fairly robust based on the examples we consider below.  As some further evidence of the robustness of Simon's method applied to germs, we mention the recent work \cite{esquivel2024prioriboundsdynamicfractional} which considers localized Schauder estimates in the context of the fractional heat equation.\footnote{The present work is a minor revision of a version submitted for publication on October 1, 2024, prior to the appearance of \cite{esquivel2024prioriboundsdynamicfractional} on ArXiv.}

\medskip

\subsection{Notation}
For a natural number $n \in \N$, we write $[n]$ for the integers from $1$ to $n$.  The space $C^0(\R^d)$ denotes all bounded and continuous functions $u:\R^d\to\C$.
For $k\in\N$ we introduce the spaces $C^k(\R^d)$ of all $u\in C^0(\R^d)$ such that $\partial^\gamma u\in C^0(\R^d)$ for all $|\gamma|\le k$.  Note that in general, the definition of $|\gamma|$ depends on the scaling $\mathfrak{s}$ introduced in the following section, cf. \eqref{e51}.

\section{An Illustration of the Method}\label{sec:illustration}
In this section, we illustrate the main idea of how to use Simon's \cite{simon1997schauder} blow-up arguments for elliptic/parabolic regularity of  germs.  The basic setup is that we have an ambient space $\R^{d}$ and a subset $D \subseteq \R^{d}$.  The most relevant examples to keep in mind are when $D$ is simply an open subset of $\R^{d}$ or a lattice.  We fix a grading of each of the $d$ variables, namely $\mathfrak{s}=(\mathfrak{s}_{1},\cdots, \mathfrak{s}_{d}) \in \N^{d}$, which determines a notion of degree: namely, for a multi-index $\gamma\in\N_0^d$ we introduce
\begin{align}
|\gamma|:=\mathfrak s \cdot \gamma = \sum_{i=1}^d \mathfrak{s}_i \gamma_i. \label{e51}
\end{align}
We are interested in the Schauder theory of an elliptic operator with constant (complex) coefficients of the form 
\begin{equation}
 \mathcal{L}=\sum_{|\gamma|=m} a_\gamma \partial^\gamma \label{e52},   
\end{equation}
with $m\in\N$.  In the present context, ellipticity means that for every domain $D\subseteq \R^d$ and distribution $u\in \mathcal{D}'(D)$ one has $u\in C^\infty(D)$ if $\mathcal{L}u\in C^\infty(D)$, cf. Section 2 in \cite{Tre75}.\footnote{Some authors use the terminology hypoelliptic, but since we only consider scaling homogeneous operators with constant coefficients, we use the term elliptic to avoid confusion.}  For most of the present section, we have in mind second order operators $\mathcal{L}$, the simplest examples being the Laplacian and the heat operator (which is of order $2$ in an appropriate anisotropic framework), the treatment of general  operators is postponed to Section \ref{sec:generalOperators}.

\medskip

As alluded to in the introduction, for applications to singular SPDE one requires a Schauder theory not only for functions, but for germs, which we now define.
\begin{definition}[Germs] \label{def:germs}
    A germ $U$ over a subset $D \subseteq \R^{d}$ is a family $(U_{x})_{x \in D}$ of continuous functions $y \in D \mapsto U_{x}(y) \in \C$.
\end{definition}
To measure germs, we use a generalization of the classical H\"{o}lder semi-norms.  Recall that the scaling $\mathfrak{s}$ determines a natural anisotropic distance\footnote{To lighten notation, in comparison to \cite{Hai13}, we drop the sub-script $\mathfrak{s}$.} defined for $x,y \in \R^{d}$ by
\begin{equation}
d(x,y):=\sum_{i=1}^{d}|x_{i}-y_{i}|^{\frac{1}{\mathfrak{s}_{i}}} \label{e35}.
\end{equation}
Throughout the paper, given a base-point $x \in \R^{d}$ and a radius $R>0$, we denote by $B_{R}(x)$ the anisotropic ball with respect to $d$.

\medskip

The basic goal of Schauder theory for germs is to estimate the following $G^{\eta}(D)$ norm, denoted by $\|U\|_{G^{\eta}(D)}$ and defined to be the infimum over all  $M>0$ such that for all $x,y \in D$
\begin{equation}
    |U_{x}(y)| \leq M d^{\eta}(x,y).\label{e30}
\end{equation}
The above norm appears already in a one-dimensional setting in the work of Gubinelli \cite{Gub04}, and is a basic quantity in the modern theory of rough paths, cf. \cite{friz2020course}.  For a particular choice of germ relevant to quasi-linear SPDE, it was used in a multi-dimensional setting in \cite{OtW19} and also plays an important role in the follow-up works \cite{otto2018parabolic} and \cite{OSSW25}, see also \cite{moinat2020space} for an application to the $\Phi^{4}_{3}$ model.
\begin{remark}
In general, if $D$ is a bounded domain in $\R^{d}$, one may wish to include an additional weight in  \eqref{e30} which degenerates at the boundary.  This is not necessary for the examples we have in mind in this section, since we work either in an unbounded domain or we assume the germ is well-behaved at the relevant boundary.
\end{remark}

Typically, we refer to $x \in D$ in the above definition as the `base point' and $y$ as the `active variable'.  
Classical Schauder estimates, cf. \cite{simon1997schauder}, control a semi-norm of a \textit{single function} $u$ by a suitable semi-norm of $\mathcal{L}u$, on the H\"{o}lder scale.  In the present setting, given a germ $U$ over an open subset $D \subseteq \R^{d}$, $\mathcal{L}U$ is understood to be a family of distributions $(\mathcal{L}U_{x})_{x \in D}$, that is $\mathcal{L}$ is understood to be applied to the active variable with a \textit{fixed} base-point.  In fact, $\mathcal{L}U$ is a distribution-valued germ in the following sense. 
\begin{definition}[Distribution-Valued Germs]
A distribution-valued germ $V$ over an open subset $D \subseteq \R^{d}$ is a family $(V_{x})_{x \in D}$ where each $V_{x} \in \mathcal{D}'(D)$.
\end{definition}
We now introduce the $G^{\gamma}(D)$ norm for $\gamma<0$.  Let $\mathcal{B}\subseteq\mathcal{D}(\R^d)$ denote the subset of functions $\varphi$ in the unit ball of $C^k(\R^d)$, $k:=\lceil|\gamma|\rceil$,\footnote{For simplicity, we don't indicate the specific integer $k$ in the definition of $\mathcal{B}$. We remark that the value of $k$ may always be increased without changing the $G^\gamma(D)$ norm, cf.\@ Proposition B.1 in \cite{broux2024hairer}.} with $\supp\varphi\subseteq B_1(0)$.
Given a base-point $x$ and a convolution scale $\lambda>0$, following \cite{Hai13} we define a re-scaled test function
\begin{equation}
    \varphi_{x}^{\lambda}(y):=\lambda^{-\sum_{i=1}^{d}\mathfrak{s}_{i} }\varphi \big (\lambda^{-\mathfrak{s}_{1}}(y_{1}-x_{1} ),\cdots,\lambda^{-\mathfrak{s}_{d}}(y_{d}-x_{d} )  \big ), \nonumber
\end{equation}
which is the typical anisotropic re-scaling designed to preserve the $L^1$-norm of $\varphi$.  For $\gamma<0$ the $G^{\gamma}(D)$ semi-norm is defined by
\begin{equation}
    [V]_{G^{\gamma}(D)}:= \sup_{\varphi\in \mathcal{B}}\sup_{x \in D}\sup_{\substack{\lambda>0 \\ B_{\lambda}(x)\subseteq D}}\lambda^{-\gamma} \big | \langle V_{x},\varphi_{x}^{\lambda}  \rangle \big | \label{e34}
\end{equation}
Note that the constraint $B_{\lambda}(x) \subseteq D$ ensures that for $\varphi \in \mathcal{B}$, it holds $\varphi_{x}^{\lambda} \in C^{\infty}_{c}(D)$ so that the pairing $\langle V_{x},\varphi_{x}^{\lambda}  \rangle$ is well-defined for $V_{x} \in \mathcal{D}'(D)$.
In the Schauder theory of germs, controlling \eqref{e30} requires not only a bound on $\mathcal{L}U$, but also on an additional semi-norm of $U$, which we now introduce.
For an integer $k$, we let $\mathcal{P}_{k}(\R^{d})$ denote the polynomials of anisotropic degree (according to \eqref{e51}) of order $k$.  For $0<\alpha<\eta$, we define the $G^{\eta,\alpha}(D)$ semi-norm, denoted $[U]_{G^{\eta,\alpha}(D)}$, to be the infimum over all $M>0$ such that for all $x,y \in D$ there exists a polynomial $P\in\mathcal{P}_{\lfloor\eta\rfloor}$ such that for all $z \in D$ 
\begin{equation}
    \big |(U_{x}-U_{y}-P)(z)  \big | \leq M {d(y,z)^{\alpha}\big(d(x,y)+ d(y,z)\big )^{\eta-\alpha}} \label{e41}.
\end{equation}
Like \eqref{e30}, this semi-norm is closely related to a semi-norm on functions depending on three arguments in the work of Gubinelli \cite{Gub04} in the one dimensional setting, where the polynomial there is of degree zero.  The reader might be curious why the above semi-norm does not appear in classical Schauder theory: see Remark \ref{remark:classical} for more details on this point. 

\medskip

We now comment on the scaling properties of the above semi-norms, as this is an important ingredient of the method of Simon \cite{simon1997schauder}.  To this end, we introduce some further notation: for $w\in \R^d$ and $R>0$ we define the rescaling/recentering map $S^{R}_{w}$ by
\begin{align*}
    y \mapsto S_{w}^{R}y:=w+R^{\mathfrak{s}} y,
\end{align*}
where $R^{\mathfrak{s}} y:=(R^{\mathfrak{s}_1} y_1,\ldots, R^{\mathfrak{s}_d} y_d)$.  This transformation has a natural action on germs.  Indeed, given a germ $U$ over $D$, we denote by $S^{R}_{w}U$ the germ over $(S^{R}_{w})^{-1}D$ defined by 
\begin{equation}
    \big ( S^{R}_{w}U \big )_{x}:=U_{S^{R}_{w}x} \circ S^{R}_{w} \label{e38}.
\end{equation}
Observe that in terms of the scaling transformations, we can also write $\varphi_x^\lambda=\lambda^{-\sum_{i=1}^{d}\mathfrak{s}_{i}}\varphi\circ (S_{x}^{\lambda})^{-1}$.  Thus, for a distributional germ $V$ over an open subset $D$, the change of variables formula suggests to define a distributional germ $S^{R}_{w}V$ over $(S^{R}_{w})^{-1}D$ via
\begin{equation*}
    \langle \big ( S^{R}_{w}V \big )_{x}, \varphi \rangle:=  \langle V_{S^{R}_{w}x}, \varphi^{R}_{w}  \rangle \nonumber.
\end{equation*}
The following lemma is then an elementary consequence of change of variables.
\begin{lemma}\label{lemma:Rescaling}
Let $R>0$ and $w \in \R^{d}$. For a subset $D \subseteq \R^{d}$ and $0<\alpha<\eta$ the following identities hold for all germs $U$ over $D$
\begin{equation}
    \|S^{R}_{w}U\|_{G^{\eta}((S^{R}_{w})^{-1}D)}=R^{\eta} \|U\|_{{G^{\eta}(D)}}, \quad [S^{R}_{w}U]_{G^{\eta,\alpha}((S^{R}_{w})^{-1}D)}=R^{\eta} [U]_{{G^{\eta,\alpha}(D)}} \label{e39}.
\end{equation}
Furthermore, if $D$ is open and $\gamma<0$, then for all distributional germs $V$ over $D$ it holds
\begin{equation}
    [S^{R}_{w}V]_{G^{\gamma}((S^{R}_{w})^{-1}D)}=R^{\gamma} [V]_{{G^{\gamma}(D)}}.\label{e40}
\end{equation}
\end{lemma}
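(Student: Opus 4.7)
The plan is to reduce all three identities to a single change of variables $y \mapsto S^R_w y$, supported by three bookkeeping facts about how the rescaling interacts with the distance, with polynomials, and with test functions. The starting point is the linear scaling of the metric: since $(S^R_w x - S^R_w y)_i = R^{\mathfrak{s}_i}(x_i - y_i)$, one obtains $d(S^R_w x, S^R_w y) = R\, d(x,y)$, from which the ball identity $S^R_w B_\lambda(x) = B_{R\lambda}(S^R_w x)$ follows at once. In particular, the admissibility constraint $B_\lambda(x) \subset (S^R_w)^{-1}D$ appearing in \eqref{e34} translates to $B_{R\lambda}(S^R_w x) \subset D$, which will be needed for \eqref{e40}.

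Given this, the inequality $\leq$ in the first part of \eqref{e39} is immediate: for $x,y \in (S^R_w)^{-1}D$, the points $x':=S^R_w x$ and $y':=S^R_w y$ lie in $D$, so that $|(S^R_w U)_x(y)| = |U_{x'}(y')| \leq \|U\|_{G^\eta(D)}\, R^\eta d(x,y)^\eta$. For the second identity in \eqref{e39}, the only additional point is that a polynomial $P \in \mathcal{P}_{\lfloor\eta\rfloor}$ realizing the bound \eqref{e41} for $U$ at $(x',y',z')$ pulls back to $P \circ S^R_w \in \mathcal{P}_{\lfloor\eta\rfloor}$, since $S^R_w$ is affine and acts coordinate-wise by the grading-respecting dilation $y_i \mapsto R^{\mathfrak{s}_i} y_i$; the two distance factors on the right-hand side of \eqref{e41} each absorb $R$ to the appropriate power, producing the total factor $R^\alpha \cdot R^{\eta-\alpha} = R^\eta$.

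For \eqref{e40}, I would first check the composition law $S^R_w \circ S^\lambda_x = S^{R\lambda}_{S^R_w x}$, which, combined with the identification $\varphi_x^\lambda = \lambda^{-\sum_{i=1}^d \mathfrak{s}_i}\varphi \circ (S^\lambda_x)^{-1}$ already pointed out in the excerpt, yields the test function transformation
\begin{equation*}
(\varphi_x^\lambda)^R_w = \varphi^{R\lambda}_{S^R_w x}.
\end{equation*}
Inserting this into the defining relation $\langle (S^R_w V)_x, \psi\rangle := \langle V_{S^R_w x}, \psi^R_w\rangle$ gives $|\langle (S^R_w V)_x, \varphi^\lambda_x\rangle| = |\langle V_{S^R_w x}, \varphi^{R\lambda}_{S^R_w x}\rangle|$, and the admissibility translation of the first paragraph ensures that the right pairing is controlled by $[V]_{G^\gamma(D)}$. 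The weight $\lambda^{-\gamma} = R^\gamma (R\lambda)^{-\gamma}$ then produces the claimed factor $R^\gamma$ upon taking the supremum over $\varphi\in\mathcal{B}$, admissible $\lambda$, and $x$. Each reverse inequality is obtained by running the same argument with $(S^R_w)^{-1} = S^{R^{-1}}_{-R^{-\mathfrak{s}}w}$ applied to $S^R_w U$ or $S^R_w V$. I do not expect any genuine obstacle; the whole proof is simply tracking compositions of rescalings, consistent with the lemma being advertised as an elementary consequence of change of variables.
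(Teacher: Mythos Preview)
Your proposal is correct and follows essentially the same route as the paper's proof: the metric scaling $d(S^R_w x, S^R_w y) = R\,d(x,y)$, the bijection on $\mathcal{P}_{\lfloor\eta\rfloor}$ induced by composition with $S^R_w$, and the composition law $S^R_w \circ S^\lambda_x = S^{R\lambda}_{S^R_w x}$ yielding $(\varphi_x^\lambda)^R_w = \varphi^{R\lambda}_{S^R_w x}$ are exactly the three ingredients the paper uses. The only cosmetic difference is that the paper phrases the reverse inequalities via the word ``bijection'' rather than explicitly invoking $(S^R_w)^{-1}$.
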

\begin{proof}
For the first identity, observe that for any $x,y \in \R^{d}$ it holds $d(S^{R}_{w}x,S^{R}_{w}y)=Rd(x,y)$, so the claim follows from observing that $S^{R}_{w}$ is a bijection from $(S^{R}_{w})^{-1}D$ to $D$.  The second identity uses the previous observation, together with the bijection from $\mathcal{P}_{\lfloor \eta \rfloor}$ to $\mathcal{P}_{\lfloor \eta \rfloor}$ given by $P \mapsto P \circ (S_{w}^{R})^{-1}$.
For the third identity, we first note the following: for any $\lambda, \rho >0$ and $x,z \in \R^{d}$ it holds
\begin{equation}
S_{x}^{\lambda}\circ S_{z}^{\rho}=S_{S_x^\lambda z}^{\rho\lambda}, \qquad (S_{x}^{\lambda})^{-1}=S_{-(\lambda^{-1})^{\mathfrak{s}}x}^{\lambda^{-1}} \label{e48}.
\end{equation}
Hence, we find that
\begin{equation}
    \langle (S^{R}_{w}V)_{x}, \varphi_{x}^{\lambda} \rangle=\langle V_{S^{R}_{w}x},(\varphi^{\lambda}_{x})^{R}_{w} \rangle=\langle V_{S^{R}_{w}x},\varphi^{R\lambda }_{S_w^Rx} \rangle \label{e49},
\end{equation}
where we used \eqref{e48} to obtain
\begin{equation}
(\varphi^{\lambda}_{x})^{R}_{w}=(\lambda R)^{-\sum_{i=1}^{d}\mathfrak{s}_{i}}\varphi \circ (S^{\lambda}_{x})^{-1} \circ (S^{R}_{w})^{-1}=(\lambda R)^{-\sum_{i=1}^{d}\mathfrak{s}_{i}}\varphi \circ (S^{R}_{w} \circ S^{\lambda}_{x})^{-1}=\varphi^{R\lambda }_{S_w^Rx} \nonumber.    
\end{equation}
The third identity now follows directly from \eqref{e49} taking into account that $B_{\lambda}(x) \subseteq (S^{R}_{w})^{-1}D$ if and only if $B_{R\lambda} (S_w^Rx) \subseteq D$.
\end{proof}
For future reference, we note that \eqref{e40} is particularly useful in the form
\begin{equation}
[\mathcal{L}S^{R}_{w}U]_{G^{\gamma}((S^{R}_{w})^{-1}D)}=R^{\gamma+m} [\mathcal{L}U]_{{G^{\gamma}(D)}}\label{e53},
\end{equation}
which follows from noting that $\mathcal{L}S^{R}_{w}=R^{m}S^{R}_{w}\mathcal{L}$.

\medskip

\subsection{From Germ Bounds to H\"{o}lder Bounds} 
In typical applications to singular SPDE, see for example \cite{otto2018parabolic} or \cite{OSSW25}, the parameter $\alpha$ in the $G^{\eta,\alpha}(D)$ semi-norm is known in advance to be the expected optimal H\"{o}lder regularity of the function $U_{x}$ for a fixed base-point $x$.
For instance, the germ based on freezing of coefficients of \eqref{SPDE} mentioned in the introduction would have H\"{o}lder regularity to order $\alpha<\frac{1}{2}$ for space-time white noise.
This however is generally not sufficient to apply the reconstruction theorem and obtain pathwise bounds on the (renormalized) non-linear term in \eqref{SPDE}.
The aim of Schauder theory for germs is to obtain a form of higher regularity $\eta>\alpha$, then retrieve traditional H\"{o}lder regularity as a corollary.
In fact, it turns out to be useful for the proof of the Schauder estimate to collect this corollary in advance: namely, for suitable subsets $D \subseteq \R^{d}$, if $\|U\|_{G^{\eta}(D)}+[U]_{G^{\eta,\alpha}(D)}<\infty$, then each $U_{x}$ is (locally) $\alpha$-H\"{o}lder continuous.   
\begin{lemma}\label{lemma:Holderbound2}
Let $0<\alpha<1<\eta<2$ and assume $D \subseteq \R^{d}$ is a subset with the following property: for each $i \in [d]$ with $\mathfrak{s}_{i}=1$, if $x,y \in D$ then $y+d(x,y)\mathbf{e}_{i} \in D$.
Then for all germs $U$ over $D$ and all $R>0$
 \begin{equation}
     \sup_{x \in D}[U_{x}]_{C^{\alpha}( D \cap B_{R}(x))} \lesssim \big ( \|U\|_{G^{\eta}(D)}+[U]_{G^{\eta,\alpha}(D)} \big ) R^{\eta-\alpha} \label{e33}.
 \end{equation}
\end{lemma}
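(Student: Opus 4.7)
Fix $x\in D$ and distinct $z,z'\in D\cap B_R(x)$. My plan is to exploit the defining inequality \eqref{e41} for the $G^{\eta,\alpha}(D)$ semi-norm at the base-point pair $(x,z)$: given $M>[U]_{G^{\eta,\alpha}(D)}$, pick an associated polynomial $P=P_{x,z}\in \mathcal P_{\lfloor\eta\rfloor}=\mathcal P_{1}$ (since $\eta<2$). Testing \eqref{e41} at the active variable $w=z$ forces the right-hand side to vanish, and together with the trivial identity $U_z(z)=0$ (from the $G^\eta$ bound) this yields $U_x(z)=P(z)$. This gives the telescoping decomposition
\begin{equation*}
U_x(z)-U_x(z')=\bigl(P(z)-P(z')\bigr)+\bigl(P(z')+U_z(z')-U_x(z')\bigr)-U_z(z'),
\end{equation*}
and the strategy is to bound each of the three parentheses separately.

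The last two parentheses can be controlled directly. Applying \eqref{e41} now at $w=z'$ bounds the middle term by $Md(z,z')^\alpha(d(x,z)+d(z,z'))^{\eta-\alpha}$, while the $G^\eta$ bound gives $|U_z(z')|\le\|U\|_{G^\eta(D)}d(z,z')^\eta$. Since $d(x,z),d(z,z')\le 2R$ and $\alpha<\eta$, both pieces are already of the desired order $\bigl(\|U\|_{G^\eta(D)}+M\bigr)d(z,z')^\alpha R^{\eta-\alpha}$.

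The main obstacle, and exactly what the geometric hypothesis on $D$ is tailored for, is controlling $|P(z)-P(z')|$. Because $P$ has anisotropic degree at most one, only directions $e_i$ with $\mathfrak s_i=1$ contribute: writing $P(y)=c_0+\sum_{i:\mathfrak s_i=1}c_i y_i$, we have $P(z)-P(z')=\sum_{i:\mathfrak s_i=1}c_i(z_i-z_i')$. To bound $|c_i|$ I would test \eqref{e41} at the shifted active variable $w_i:=z+d(x,z)e_i$, which lies in $D$ precisely by the assumption on $D$ applied to $x,z\in D$. Because $\mathfrak s_i=1$, one computes $d(z,w_i)=d(x,z)$ and $d(x,w_i)\le 2d(x,z)$, and the identity $P(w_i)-P(z)=c_i\, d(x,z)$ combined with the $G^{\eta,\alpha}$ bound at $w_i$, the $G^\eta$ bounds on $U_x(w_i),U_z(w_i),U_x(z)$, and the already-established identity $P(z)=U_x(z)$, gives
\begin{equation*}
|c_i|\,d(x,z)\le |U_x(w_i)|+|U_z(w_i)|+|U_x(z)|+M\cdot 2^{\eta-\alpha}d(x,z)^\eta\lesssim\bigl(\|U\|_{G^\eta(D)}+M\bigr)d(x,z)^\eta,
\end{equation*}
whence $|c_i|\lesssim\bigl(\|U\|_{G^\eta(D)}+M\bigr)d(x,z)^{\eta-1}\le\bigl(\|U\|_{G^\eta(D)}+M\bigr)R^{\eta-1}$. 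The degenerate case $d(x,z)=0$ (i.e.\@ $z=x$) is consistent with choosing $P_{x,x}=0$ and is in any case handled at once by the direct estimate $|U_x(z')|\le\|U\|_{G^\eta(D)}d(x,z')^\eta$.

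To conclude, exploiting once more $\mathfrak s_i=1$, one has $|z_i-z_i'|\le d(z,z')\le(2R)^{1-\alpha}d(z,z')^\alpha$, which together with the coefficient estimate yields $|P(z)-P(z')|\lesssim\bigl(\|U\|_{G^\eta(D)}+M\bigr)R^{\eta-\alpha}d(z,z')^\alpha$. Substituting all three bounds into the decomposition and sending $M\downarrow [U]_{G^{\eta,\alpha}(D)}$ produces \eqref{e33}.
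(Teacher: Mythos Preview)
Your proof is correct and follows essentially the same approach as the paper: both arguments pick the polynomial $P_{x,z}\in\mathcal P_1$ from the $G^{\eta,\alpha}$ condition, observe that $P_{x,z}(z)=U_x(z)$, and reduce the H\"older increment to controlling the linear coefficients of $P_{x,z}$ via the probe point $z+d(x,z)e_i\in D$ furnished by the geometric hypothesis. Your telescoping decomposition is a harmless repackaging of the paper's two bounds $|U_x(z')-P_{x,z}(z')|$ and $|P_{x,z}(z')-P_{x,z}(z)|$.
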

\begin{remark}
The restriction $\eta \in (1,2)$ is made for simplicity of exposition in this section, as this is the typical case of interest for applications to second order sub-critical SPDEs. It will be removed in Section \ref{sec:further} where we consider a more general class of operators.
\end{remark}
\begin{proof}
Let $U$ be a germ with $\|U\|_{G^{\eta}(D)}+[U]_{G^{\eta,\alpha}(D)}<\infty$. Fix a base point $x \in D$, a radius $R>0$, and let $y,z \in D \cap B_{R}(x)$.  We would like to estimate the increment $U_{x}(z)-U_{x}(y)$, and we first observe that since $[U]_{G^{\eta,\alpha}(D)}<\infty$ and $\eta \in (1,2)$, there exists a polynomial $P_{x,y} \in \mathcal{P}_{1}$ such that
\begin{align}
\big |U_{x}(z)-U_{y}(z)-P_{x,y}(z) \big | 
&\leq 2[U]_{G^{\eta,\alpha}(D)}  d(y,z)^{\alpha}\big (d(y,z)+d(x,y) \big )^{\eta-\alpha} \label{e22} \\
& \lesssim [U]_{G^{\eta,\alpha}(D)}d(y,z)^{\alpha}R^{\eta-\alpha}
,\label{e12}
\end{align}
where we used the triangle inequality in the form $d(y,z)\leq 2R$ in the second step.  Since $\|U\|_{G^{\eta}(D)}<\infty$, we have  $|U_{y}(z)| \leq \|U\|_{G^{\eta}(D)}d(y,z)^{\eta} \lesssim \|U\|_{G^{\eta}(D)} d(y,z)^{\alpha}R^{\eta-\alpha}$.
By the triangle inequality, \eqref{e12} hence turns into
\begin{equation}
\big |U_{x}(z)-P_{x,y}(z) \big | 
\lesssim \big(  [U]_{G^{\eta,\alpha}(D)}+\|U\|_{G^{\eta}(D)}   \big )d(y,z)^{\alpha}R^{\eta-\alpha} .\label{e13}
\end{equation}
Note that $P_{x,y}(y)=U_x(y)$, so
\begin{equation}
 P_{x,y}=U_x(y)+\sum_{i \in [d], \mathfrak{s}_i=1}\gamma^{i}_{x}(y) (\cdot-y)_{i} \label{e36}   
\end{equation}
for some $\gamma^{i}_{x}(y) \in \R$.  We claim that for each $i \in [d]$ with $\mathfrak{s}_i=1$, 
\begin{equation}
\|\gamma^{i}\|_{G^{\eta-1}(D) }\lesssim   \|U\|_{G^{\eta}(D)}+[U]_{G^{\eta,\alpha}(D)} \label{e15}.
\end{equation} 
Combining this with $|P_{x,y}(z)-U_x(y)| \leq \|\gamma^{i}\|_{G^{\eta-1}(D)} d(x,y)^{\eta-1}d(y,z) \leq \|\gamma^{i}\|_{G^{\eta-1}(D)} R^{\eta-\alpha}d(y,z)^{\alpha}$ implies \eqref{e33}.

\medskip

We now give the argument for \eqref{e15} following Step 6 in Proposition 2 of  \cite{OSSW25}, which is modelled on the proof of Lemma 3.3 in \cite{OtW19}.  Fix $i \in [d]$ with $\mathfrak{s}_{i}=1$ and consider any $x,y \in D$. By assumption, the point $w:=y+\mathbf{e}_{i}d(x,y)$ belongs to $D$ and from the definition \eqref{e35}, we see that  $d(y,w)=d(x,y)$.  From \eqref{e22} (with $w$ in place of $z$), \eqref{e36} and the triangle inequality
\begin{align}
    |\gamma_{x}^{i}(y)| d(x,y) &\lesssim \big ( \|U\|_{G^{\eta}(D)}+[U]_{G^{\eta,\alpha}(D)} \big ) d(x,y)^{\eta}+\big |U_{x}(w)-U_{x}(y)-U_{y}(w) \big | \nonumber \\
    &\lesssim ( \|U\|_{G^{\eta}(D)}+[U]_{G^{\eta,\alpha}(D)} \big ) d(x,y)^{\eta},
\end{align}
where we used the definition \eqref{e30} and the triangle inequality in the last step, completing the argument for \eqref{e15} and hence the proof of the Lemma.
\end{proof}
We now turn to a related property of the germ $\mathcal{L}U$, where we assume $\mathcal{L}$ is given by \eqref{e52}.  Namely, if $D$ is an open subset and $\mathcal{L}U$ has a finite $G^{\eta-m}(D)$ semi-norm for $\eta<m$ and $U$ has a finite $G^{\eta,\alpha}$ semi-norm, then for each base-point $x$, the function $\mathcal{L}U_{x}$ is H\"{o}lder continuous of order $\alpha-m$.
\begin{lemma} \label{lemma:Holderbound3}
Let $\mathcal{L}$ be given by \eqref{e52} and $0<\alpha<\eta<m$ and $D \subseteq \R^{d}$ be open.  Then for all germs $U$ over $D$ and $R>0$
\begin{equation}
    \sup_{x \in D} \|\mathcal{L}U_{x}\|_{C^{\alpha-m}(D \cap B_{R}(x)) } \lesssim \big ( \|\mathcal{L}U\|_{G^{\eta-m}(D)}+[U]_{G^{\eta,\alpha}(D)} \big )R^{\eta-\alpha} \label{e37}
\end{equation}
\end{lemma}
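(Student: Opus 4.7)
I would unpack the norm $\|\mathcal{L}U_x\|_{C^{-m}(D\cap B_R(x))}$ in the usual distributional fashion analogous to \eqref{e34}: namely, as the supremum of $\lambda^{m}|\langle \mathcal{L}U_x,\varphi^{\lambda}_{y}\rangle|$ over all $\varphi\in\mathcal{B}$, all base-points $y\in D\cap B_R(x)$, and all scales $\lambda>0$ with $B_\lambda(y)\subset D\cap B_R(x)$. It therefore suffices to show, for any such triple $(\varphi,y,\lambda)$, the bound
\begin{equation*}
\bigl|\langle \mathcal{L}U_x,\varphi^{\lambda}_{y}\rangle\bigr|\lesssim \lambda^{-m}R^{\eta}\bigl(\|\mathcal{L}U\|_{G^{\eta-m}(D)}+[U]_{G^{\eta,\alpha}(D)}\bigr).
\end{equation*}

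The plan is to re-center at $y$. By the definition of $[U]_{G^{\eta,\alpha}(D)}$, there exists $P\in\mathcal{P}_{\lfloor\eta\rfloor}$ such that
\begin{equation*}
\bigl|U_{x}(z)-U_{y}(z)-P(z)\bigr|\lesssim [U]_{G^{\eta,\alpha}(D)}\,d(y,z)^{\alpha}\bigl(d(x,y)+d(y,z)\bigr)^{\eta-\alpha}\qquad\text{for all }z\in D.
\end{equation*}
Crucially, since $\eta<m$ the polynomial $P$ has anisotropic degree strictly less than $m$, so every monomial in $P$ is killed by $\mathcal{L}$; hence $\mathcal{L}P=0$ and we may split
\begin{equation*}
\langle \mathcal{L}U_x,\varphi^{\lambda}_{y}\rangle=\langle \mathcal{L}U_y,\varphi^{\lambda}_{y}\rangle+\langle \mathcal{L}(U_x-U_y-P),\varphi^{\lambda}_{y}\rangle.
\end{equation*}

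The first term is immediate: since $B_\lambda(y)\subset D$, the $G^{\eta-m}(D)$ definition gives $|\langle \mathcal{L}U_y,\varphi^{\lambda}_{y}\rangle|\le\|\mathcal{L}U\|_{G^{\eta-m}(D)}\lambda^{\eta-m}$, and because $\lambda\le R$ on the admissible range this is bounded by $\lambda^{-m}R^{\eta}\|\mathcal{L}U\|_{G^{\eta-m}(D)}$. For the second term I would integrate by parts to move $\mathcal{L}$ onto the test function. Since $U$ consists of continuous functions and $\varphi^{\lambda}_{y}\in C^{\infty}_{c}(D)$, this is legitimate, and using the constant-coefficient form \eqref{e52} together with the scaling identity $\partial^{\gamma}\varphi^{\lambda}_{y}=\lambda^{-|\gamma|}(\partial^{\gamma}\varphi)^{\lambda}_{y}$, we obtain $\|\mathcal{L}^{\ast}\varphi^{\lambda}_{y}\|_{L^{1}}\lesssim\lambda^{-m}$. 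Combined with the pointwise bound above, restricted to $z\in B_{\lambda}(y)$ where $d(y,z)\le\lambda$, we get
\begin{equation*}
\bigl|\langle \mathcal{L}(U_x-U_y-P),\varphi^{\lambda}_{y}\rangle\bigr|\lesssim [U]_{G^{\eta,\alpha}(D)}\lambda^{-m}\lambda^{\alpha}\bigl(d(x,y)+\lambda\bigr)^{\eta-\alpha}.
\end{equation*}
Since $y\in B_{R}(x)$ forces $d(x,y)<R$ and $B_\lambda(y)\subset B_R(x)$ forces $\lambda\lesssim R$, we have $\lambda^{\alpha}(d(x,y)+\lambda)^{\eta-\alpha}\lesssim R^{\eta}$, which yields the desired bound.

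The main subtlety I anticipate is bookkeeping: verifying that the admissible range for $\lambda$ really does give $\lambda\lesssim R$ (so that both terms produce the factor $R^\eta$ rather than only $\lambda^\eta$), and confirming that the integration by parts is valid even though $U_x-U_y-P$ is only known to be continuous. The first is a direct consequence of the constraint $B_\lambda(y)\subset D\cap B_R(x)$, and the second is immediate because $\varphi^{\lambda}_{y}$ is smooth and compactly supported inside $D$.
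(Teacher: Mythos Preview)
Your proposal is correct and follows essentially the same approach as the paper: both re-center at $y$, exploit $\mathcal{L}P=0$ for $P\in\mathcal{P}_{\lfloor\eta\rfloor}$ (since $\eta<m$), split $\langle\mathcal{L}U_x,\varphi_y^\lambda\rangle$ into the $\mathcal{L}U_y$ piece controlled by the $G^{\eta-m}$ norm and the increment piece handled via integration by parts and the $G^{\eta,\alpha}$ bound, and finish using $\lambda\lesssim R$ and $d(x,y)<R$. Your write-up is in fact slightly more explicit than the paper's about why $\mathcal{L}P=0$ and about the admissible range of $\lambda$.
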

\begin{proof}
Let $x,y \in D$ and $\lambda>0$ satisfy $B_{\lambda}(y) \subseteq B_{R}(x) \cap D$.  By the triangle inequality, for $\varphi \in \mathcal{B}$ it holds
\begin{align}
  \big | \langle  \mathcal{L}U_{x},\varphi_{y}^{\lambda} \rangle \big |  &\leq \big | \langle  \mathcal{L}U_{y},\varphi_{y}^{\lambda} \rangle \big |+\big | \langle  U_{y}-U_{x},\mathcal{L}^{*}\varphi_{y}^{\lambda} \rangle \big | \nonumber \\
  &\leq \|\mathcal{L}U\|_{G^{\eta-m}(D)}\lambda^{\eta-m}+ [U]_{G^{\eta,\alpha}(D)} \langle d(y,\cdot)^{\alpha} (d(x,y)+d(y,\cdot))^{\eta-\alpha},|\mathcal{L}^{*}\varphi^{\lambda}_{y}|  \rangle \nonumber \\
  & \lesssim \|\mathcal{L}U\|_{G^{\eta-m}(D)}\lambda^{\eta-m} +[U]_{G^{\eta,\alpha}(D)} \lambda^{\alpha-m}\big ( d(x,y)^{\eta-\alpha}+\lambda^{\eta-\alpha} \big ) \nonumber \\
  &\lesssim \big ( \|\mathcal{L}U\|_{G^{\eta-m}(D)}+[U]_{G^{\eta,\alpha}(D)} \big ) \lambda^{\alpha-m}R^{\eta-\alpha}, \nonumber 
  \end{align}
where we used that $\mathcal{L}P=0$ for any $P \in  \mathcal{P}_{m-1}$ together with the inequalities $\lambda \leq R$ and $\eta>\alpha$.
\end{proof}

\subsection{The Laplacian on $\R^{d}$} \label{sec:Laplacian}
We start with the Schauder estimate for $\Delta$ on $\R^{d}$, which is the simplest setting to illustrate the method.  Hence, in this subsection we choose the grading $\mathfrak{s}=(1,\cdots,1)$, which determines the relevant semi-norms cf. \eqref{e35}, \eqref{e30}, \eqref{e34} and \eqref{e41}.  We will prove the following Schauder estimate.
\begin{theorem}\label{thm:continuum}
Let $0<\alpha<1<\eta<2$.  There exists a constant $C>0$ such that for all germs $U$ with $\|U\|_{G^{\eta}(\R^{d})}<\infty$ it holds
\begin{equation}
    \|U\|_{G^{\eta}(\R^{d})} \leq C \big (  [\Delta U]_{G^{\eta-2}(\R^{d})}+[U]_{G^{\eta,\alpha}(\R^{d})} \big ) \label{e10}.
\end{equation}

\end{theorem}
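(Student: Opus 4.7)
The plan is to follow Simon's indirect blow-up argument. Suppose \eqref{e10} fails: then there is a sequence of germs $U^{(n)}$ with $\|U^{(n)}\|_{G^\eta(\R^d)}<\infty$ such that, after normalizing to $\|U^{(n)}\|_{G^\eta(\R^d)}=1$, one has $[\Delta U^{(n)}]_{G^{\eta-2}(\R^d)}+[U^{(n)}]_{G^{\eta,\alpha}(\R^d)}<1/n$. By the definition \eqref{e30}, I can then choose base points $x_n,y_n\in\R^d$ with $R_n:=d(x_n,y_n)>0$ nearly saturating the norm, i.e.\ $|U^{(n)}_{x_n}(y_n)|\ge (1-1/n)R_n^\eta$.

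The key step is the blow-up: set $\tilde U^{(n)}:=R_n^{-\eta}S^{R_n}_{x_n}U^{(n)}$. Lemma \ref{lemma:Rescaling} together with \eqref{e53} gives $\|\tilde U^{(n)}\|_{G^\eta(\R^d)}=1$ while $[\Delta\tilde U^{(n)}]_{G^{\eta-2}(\R^d)}+[\tilde U^{(n)}]_{G^{\eta,\alpha}(\R^d)}<1/n$. Moreover, $z_n:=(S^{R_n}_{x_n})^{-1}y_n$ satisfies $d(0,z_n)=1$ and $|\tilde U^{(n)}_0(z_n)|\ge 1-1/n$. By Lemma \ref{lemma:Holderbound2}, the functions $\tilde U^{(n)}_0$ enjoy locally uniform $\alpha$-H\"older bounds depending only on $\|\tilde U^{(n)}\|_{G^\eta(\R^d)}+[\tilde U^{(n)}]_{G^{\eta,\alpha}(\R^d)}\le 2$. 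Combined with the pointwise bound $|\tilde U^{(n)}_0(y)|\le d(0,y)^\eta$ (in particular $\tilde U^{(n)}_0(0)=0$), Arzel\`a--Ascoli and a diagonal extraction yield a subsequence along which $\tilde U^{(n)}_0\to U^\infty_0$ locally uniformly on $\R^d$, with $z_n\to z_\infty$ satisfying $d(0,z_\infty)=1$ and $|U^\infty_0(z_\infty)|\ge 1$.

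Next, I would identify $U^\infty_0$ as harmonic. For any $\psi\in C^\infty_c(\R^d)$ supported in some $B_R(0)$, writing $\psi=CR^{\sum_i\mathfrak{s}_i}\tilde\varphi^R_0$ for a suitable $\tilde\varphi\in\mathcal{B}$, definition \eqref{e34} yields $|\langle\Delta\tilde U^{(n)}_0,\psi\rangle|\le C_\psi[\Delta\tilde U^{(n)}]_{G^{\eta-2}(\R^d)}\to 0$. Since $\tilde U^{(n)}_0\to U^\infty_0$ in $\mathcal{D}'(\R^d)$ and $\Delta$ is continuous on $\mathcal{D}'$, I obtain $\Delta U^\infty_0=0$; by ellipticity, $U^\infty_0$ is smooth and harmonic on $\R^d$. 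The growth bound $|U^\infty_0(y)|\le d(0,y)^\eta$ with $\eta<2$ then forces, via the classical Liouville theorem for harmonic functions, $U^\infty_0$ to be a polynomial of degree at most $\lfloor\eta\rfloor=1$; using $U^\infty_0(0)=0$, one has $U^\infty_0(y)=\sum_ic_iy_i$, and testing against $y=\epsilon e_j$ as $\epsilon\downarrow 0$ together with $\eta>1$ forces $c_j=0$ for every $j$. Hence $U^\infty_0\equiv 0$, contradicting $|U^\infty_0(z_\infty)|\ge 1$.

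The main obstacle is the compactness step: without a uniform $\alpha$-H\"older bound on each $\tilde U^{(n)}_0$ one cannot extract a subsequential limit germ, and this is precisely where the $G^{\eta,\alpha}$ term on the right-hand side of \eqref{e10} enters, via Lemma \ref{lemma:Holderbound2}. A minor technicality is the conversion from the specific test functions $\varphi^\lambda_x$ used in \eqref{e34} to arbitrary $\psi\in C^\infty_c(\R^d)$ so that the Laplacian can be passed to the limit; this is routine on $D=\R^d$ since the constraint $B_\lambda(x)\subset D$ is then automatic.
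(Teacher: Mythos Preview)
Your proof is correct and follows essentially the same blow-up strategy as the paper's proof: normalize, pick near-optimal points, rescale via $S^{R_n}_{x_n}$, extract a limit with Arzel\`a--Ascoli using Lemma~\ref{lemma:Holderbound2}, and contradict Liouville. One minor difference: you verify harmonicity of the limit directly by writing an arbitrary $\psi\in C^\infty_c(\R^d)$ as a multiple of some $\tilde\varphi^R_0$ and invoking \eqref{e34} at the single base point $0$, whereas the paper routes through Lemma~\ref{lemma:Holderbound3} to obtain the stronger bound $\|\Delta u^n\|_{C^{-2}(B_R)}\lesssim n^{-1}R^\eta$; your shortcut is legitimate here because after rescaling the base point is fixed at the origin, so Lemma~\ref{lemma:Holderbound3} is not actually needed for this particular theorem.
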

The two main ingredients for Simon's \cite{simon1997schauder} method are \textit{scaling} and \textit{Liouville's theorem}.  The outline of the argument is as follows: since we can always multiply \eqref{e10} by a constant, an equivalent statement is: there exists a $C>0$ such for all germs with $\|U\|_{G^{\eta}(\R^{d})}=1$,
\begin{equation}
 [\Delta U]_{G^{\eta-2}(\R^{d})}+[U]_{G^{\eta,\alpha}(\R^{d})} \geq C^{-1}. \nonumber  
\end{equation}
Hence, if Theorem \ref{thm:continuum} is false, there exists a sequence of  germs $U^{n}$ with $\|U^{n}\|_{G^{\eta}(\R^{d})}=1$ for which $[\Delta U^{n}]_{G^{\eta-2}(\R^{d})}+[U^{n}]_{G^{\eta,\alpha}(\R^{d})}$ converges to zero.
The strategy is to use this information to construct a harmonic function which violates Liouville's theorem.
To pass from germs to functions, we observe first that $\|U^{n}\|_{G^{\eta}(\R^{d})}=1$ implies there is a base-point $x^{n}$ where a `concentration' occurs, namely there exists $y^{n}$ for which $(R^{n})^{-\eta}|U_{x^{n}}^n(y^{n})| \geq \frac{1}{2}$, where $R^{n}:=d(x^{n},y^{n})$.  We re-center and re-scale around $x^{n}$ by setting
\begin{equation}
    y \in \R^{d} \mapsto u^{n}(y):=(R^{n})^{-\eta}U^{n}_{x^{n}}(x^{n}+R^{n}y). \nonumber
\end{equation}
The aim is to show that (modulo a subsequence), $u^{n}$ converge locally uniformly to a harmonic function $u$ with sub-quadratic growth, namely $|u(y)| \leq d(0,y)^{\eta}$.  This will turn out to contradict Liouville's theorem as in the classical work \cite{simon1997schauder}.
\begin{proof}[Proof of Theorem \ref{thm:continuum}]
If the statement is false, there exists a sequence of germs $U^{n}$ on $\R^{d}$ such that $\|U^{n}\|_{G^{\eta}(\R^{d})}=1$ and $\|\Delta U^{n}\|_{G^{\eta-2}(\R^{d})}+[U^{n}]_{G^{\eta,\alpha}(\R^{d})} \leq \frac{1}{n}$. In particular, there exists $x^{n},y^{n} \in \R^{d}$ such that for $R^{n}:=d(x^{n},y^{n})$, it holds
\begin{equation}
    (R^{n})^{-\eta}|U_{x^{n}}^n(y^{n})| \geq \frac{1}{2} \label{e6}.
\end{equation}
Recalling \eqref{e38}, consider the re-centered/re-scaled germs $(R^{n})^{-\eta}S^{R_{n}}_{x^{n}}U^{n}$ and define a sequence of functions $u^{n}: \R^{d} \mapsto \R$ by choosing the base-point to be the origin.
That is, let $u^{n}:=(R^{n})^{-\eta} \big ( S^{R_{n}}_{x^{n}}U^{n} \big )_{0} $.  By scaling relations in Lemma \ref{lemma:Rescaling}, combined with Lemmas \ref{lemma:Holderbound2} and \ref{lemma:Holderbound3}, and \eqref{e53} we find for all $R>0$ 
\begin{equation}
    [u^{n}]_{C^{\alpha}(B_{R})} \lesssim R^{\eta-\alpha}, \qquad  \|\Delta u^{n}\|_{C^{\alpha-2}(B_{R})} \lesssim \frac{1}{n}R^{\eta}, \qquad \sup_{y \neq 0} d(y,0)^{-\eta}|u^{n}(y)| \leq 1 \label{e42}.
\end{equation}
The Arzel\`a-Ascoli theorem provides compactness along a subsequence (not re-labelled), along with locally uniform convergence to a continuous limit $u: \R^{d} \to \R$ with sub-quadratic growth
\begin{equation}
    \sup_{y \neq 0}d(0,y)^{-\eta} |u(y)| \leq 1. \label{e16}
\end{equation}
Hence, by the second estimate in \eqref{e42} it holds $\|\Delta u\|_{C^{\alpha-2}(B_{R})}=0$ for all $R>0$, which implies $u$ is harmonic.   
Due to the upper bound \eqref{e16} and  $\eta \in (1,2)$, it follows from Liouville's theorem that $u$ is a polynomial of degree at most one, but the only polynomial satisfying \eqref{e16} is identically zero.  On the other hand, \eqref{e6} implies $|u^{n}(z^{n})| \geq \frac{1}{2}$ for $z^{n}:=(S^{R^{n}}_{x^{n}})^{-1}y^{n}$.  Since $d(z^{n},0)=1$, the sequence $z^{n}$ has a subsequential limit $z\in \R^{d}$ with $d(z,0)=1$, and due to the the local uniform convergence it holds $|u(z)| \geq \frac{1}{2}$, a contradiction.
\end{proof}
\begin{remark}\label{remark:hypoelliptic}
It is not too difficult to see that the above argument is robust enough to replace $\Delta$ by any scaling homogeneous elliptic operator on $\R^{d}$ with constant coefficients and relax the restriction on $\eta$.  Indeed, the only place which requires an additional argument is the bound \eqref{e33} needed for compactness, which requires us to account for higher order polynomials in Lemma \ref{lemma:Holderbound2}.  This is carried out in Section \ref{sec:further} below, see Lemma \ref{lemma:Holderbound_gen_hyb}.
\end{remark}

\subsection{The Heat Operator on $(0,T) \times \R^{d}$}
In this section, we discuss how to adapt the ideas from Section \ref{sec:Laplacian} to analyze the initial value problem for the heat operator $\partial_{t}-\Delta$.
In line with Remark \ref{remark:hypoelliptic}, since this operator is elliptic, the corresponding full-space estimate \eqref{e10} follows from an identical argument if we replace $\Delta$ by $\partial_{t}-\Delta$, see Theorem \ref{thm:wholespace} below.
Thus, in the present section our aim is to  address the additional subtleties that arise from the presence of an initial time boundary.  We use the ambient space $\R^{d+1}$ in this sub-section with the scaling $\mathfrak{s}=(2,1,\dots,1)$  as well as the notation $x=(x_{0},x_{1}) \in \R \times \R^{d}$ to distinguish time and space.

\medskip

In comparison with the previous section, the Schauder estimate requires a control on the germ at the initial time $(0,x_{1})$.  We denote the initial germ by $U \mid_{x_{0}=0}$, which is a germ on $\R^{d}$ defined for a base-point $x_{1} \in \R^{d}$ and active variable $y_{1} \in \R^{d}$ by $U_{(0,x_{1})}(0,y_{1})$.  The inclusion of a term monitoring the size of the initial behaviour mirrors the classical Schauder estimate for the heat equation
\begin{align*}
    [u]_{C^{\alpha+2}((0,T)\times\R^d)}\le C\big([(\partial_t-\Delta)u]_{C^\alpha((0,T)\times\R^d)}+[u(0,\cdot)]_{C^{\alpha+2}(\R^d)}\big),
\end{align*}
see e.g.\@ Exercise 9.1.4 in \cite{krylov1996lectures}.
In the setting of germs, the estimate takes the following form.
\begin{theorem}\label{thm:IVP}
Let $0<\alpha<1<\eta<2$.  There exists a universal constant $C$ such that for all $T>0$ and germs $U$ over $[0,T) \times \R^{d}$ with $\|U\|_{G^{\eta}((0,T) \times \R^{d})}<\infty$ it holds
\begin{align*}
&\|U\|_{G^{\eta}( (0,T) \times \R^{d}) } \leq C \big (  [(\partial_{t}-\Delta)U]_{G^{\eta-2}((0,T)\times\R^{d}) }+[U]_{G^{\eta,\alpha}((0,T) \times \R^{d}) }+\|U \mid_{x_{0}=0} \|_{G^{\eta}(\R^{d})}\big ).
\end{align*}
\end{theorem}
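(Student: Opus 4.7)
The plan follows the blow-up scheme of Theorem \ref{thm:continuum}, with the essential new ingredient being the treatment of the initial boundary at $x_{0}=0$.  Arguing by contradiction, suppose that for each $n \in \N$ there exist $T_{n}>0$ and a germ $U^{n}$ over $[0,T_{n})\times \R^{d}$ with $\|U^{n}\|_{G^{\eta}((0,T_{n})\times \R^{d})}=1$ while each of the three right-hand side semi-norms is at most $1/n$.  Select concentration points $x^{n},y^{n}$ with $R^{n}:=d(x^{n},y^{n})$ and $(R^{n})^{-\eta}|U^{n}_{x^{n}}(y^{n})|\geq \tfrac{1}{2}$, and set $\tilde u^{n}:=(R^{n})^{-\eta}S^{R^{n}}_{x^{n}}U^{n}$, a germ on $D_{n}:=(-s_{1}^{n},s_{2}^{n})\times \R^{d}$ with $s_{1}^{n}:=x^{n}_{0}/(R^{n})^{2}$ and $s_{2}^{n}:=(T_{n}-x^{n}_{0})/(R^{n})^{2}$, together with $u^{n}:=\tilde u^{n}_{0}$.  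A direct computation of the rescaling yields $\|\tilde u^{n}|_{t=-s_{1}^{n}}\|_{G^{\eta}(\R^{d})}=\|U^{n}|_{x_{0}=0}\|_{G^{\eta}(\R^{d})}\leq 1/n$, so combined with Lemma \ref{lemma:Rescaling} and \eqref{e53} all three rescaled semi-norms are $\leq 1/n$ while $\|\tilde u^{n}\|_{G^{\eta}(D_{n})}=1$.  Pass to a subsequence with $s_{j}^{n}\to s_{j}\in [0,\infty]$.

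\medskip

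In Case A, $s_{1}=\infty$: the initial boundary recedes and the argument of Theorem \ref{thm:continuum} applies without change, using Lemmas \ref{lemma:Holderbound2}, \ref{lemma:Holderbound3} and \eqref{e53}.  Via Arzel\`a-Ascoli one obtains a locally uniform limit $u$ caloric on $(-\infty,s_{2})\times \R^{d}$ with $|u(y)|\leq d(0,y)^{\eta}$; the classical Liouville theorem for ancient caloric functions with polynomial growth shows $u$ is a polynomial of anisotropic degree less than $2$, and the pointwise bound with $\eta\in(1,2)$ forces $u\equiv 0$, contradicting $|u(z)|\geq\tfrac{1}{2}$ at the subsequential limit $z$ of $z^{n}:=(S^{R^{n}}_{x^{n}})^{-1}y^{n}$.

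\medskip

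In Case B, $s_{1}<\infty$, the limit domain retains the initial boundary and a further step is needed.  Applying \eqref{e41} at the pair of base-points $(0,(-s_{1}^{n},0))$ yields a polynomial $P^{n}(z)=a^{n}+\sum_{i}b^{n}_{i}z_{1,i}\in\mathcal{P}_{1}$ (with coefficients bounded uniformly in $n$ via the $\gamma^{i}$-analysis in the proof of Lemma \ref{lemma:Holderbound2}) such that on compact sets
\begin{equation*}
|u^{n}(z)-\tilde u^{n}_{(-s_{1}^{n},0)}(z)-P^{n}(z)|\lesssim [\tilde u^{n}]_{G^{\eta,\alpha}(D_{n})}=O(1/n).
\end{equation*}
Evaluating at $z=(-s_{1}^{n},y_{1})$ and using the initial-trace bound $\|\tilde u^{n}|_{t=-s_{1}^{n}}\|_{G^{\eta}(\R^{d})}\leq 1/n$, the shifted function $\hat u^{n}:=u^{n}-P^{n}$ satisfies $\hat u^{n}(-s_{1}^{n},\cdot)\to 0$ locally uniformly.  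Since $P^{n}$ is caloric, $(\partial_{t}-\Delta)\hat u^{n}\to 0$ in $C^{-2}_{\mathrm{loc}}$ by Lemma \ref{lemma:Holderbound3}; combined with the local H\"older bounds for $u^{n}$ from Lemma \ref{lemma:Holderbound2} and the uniform control on $P^{n}$, Arzel\`a-Ascoli provides a further subsequence with $(a^{n},b^{n})\to(a,b)$, $\hat u^{n}\to\hat u$, and $P^{n}\to P$ locally uniformly, where $\hat u$ is caloric on $(-s_{1},s_{2})\times \R^{d}$, vanishes at $t=-s_{1}$, and is sub-quadratic.  Tychonoff's uniqueness theorem for the heat equation forces $\hat u\equiv 0$, so $u=P=a+b\cdot y_{1}$; the growth bound $|u(y)|\leq d(0,y)^{\eta}$ evaluated at $y=0$ and at $y=(0,y_{1})$ with $|y_{1}|\to 0$ (using $\eta\in(1,2)$) forces $a=b=0$, again contradicting $|u(z)|\geq\tfrac{1}{2}$.

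\medskip

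The main obstacle is Case B: identifying the correct affine correction $P^{n}$ that bridges the base-point-$0$ germ and the initial-trace germ, verifying uniform-in-$n$ control of its coefficients via the $\gamma^{i}$-bound of Lemma \ref{lemma:Holderbound2}, and crucially exploiting that the initial-germ semi-norm on $\R^{d}$ is scaling invariant under the parabolic rescaling so that the smallness of $\|U^{n}|_{x_{0}=0}\|_{G^{\eta}}$ transfers directly to the rescaled initial trace.  The subsequent reduction to Tychonoff uniqueness is then essentially dictated by the structure already present in the full-space proof.
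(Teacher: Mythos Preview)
Your argument is correct and follows the same blow-up/Liouville scheme as the paper, including the case split on whether the rescaled initial boundary recedes ($s_{1}=\infty$, handled identically via the Liouville theorem for ancient solutions). In Case~B you take a somewhat different route. The paper asserts in \eqref{e44} that $u^{n}$ is itself $O(1/n)$ along the rescaled initial slice, concludes $\hat u(T_{1},\cdot)=0$, and then invokes heat-equation uniqueness. You instead note that $u^{n}=(\tilde u^{n})_{0}$ carries base-point $0$ while the initial-trace hypothesis only controls $(\tilde u^{n})_{(-s_{1}^{n},\cdot)}$, and you bridge the two base-points via the three-point bound \eqref{e41}, producing an affine correction $P^{n}$. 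Since $P^{n}$ is caloric and of anisotropic degree at most $1$, working with $\hat u^{n}=u^{n}-P^{n}$ costs only one extra step at the end (eliminating the limiting $P$ via $|u|\le d(0,\cdot)^{\eta}$ with $\eta>1$). Your version makes the base-point bookkeeping fully explicit; the paper's route is shorter but the passage to \eqref{e44} is terser.

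Two small omissions in your write-up. First, you do not treat the degenerate case $s_{1}=s_{2}=0$: the paper handles $T_{1}=T_{2}=0$ before any Liouville step, combining the local H\"older bound with the initial-trace smallness to contradict $|u^{n}(z^{n})|\ge\tfrac12$ directly, since $z^{n}_{0}\to 0$. Second, you do not discuss how to run Arzel\`a--Ascoli across the $n$-dependent domains $D_{n}$; the paper first extends each $u^{n}$ H\"older-continuously to all of $\R^{d+1}$ via McShane extension before extracting a locally uniform limit, which is also what makes the boundary value $\hat u(-s_{1},\cdot)$ well-defined when $s_{1}<\infty$.
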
 
\begin{proof}
We use $\mathcal{L}$ to denote $\partial_{t}-\Delta$ throughout the proof.
If the statement is false, there exists a sequence of times $T^{n}>0$ and germs $U^{n}$ over $[0,T^{n}) \times \R^{d}$ such that $[U^{n}]_{G^{\eta}( (0,T^{n}) \times \R^{d}) }=1$ while $[\mathcal{L}U^{n}]_{G^{\eta-2}((0,T^{n})\times\R^{d}) }+\|U^{n}\|_{G^{\eta,\alpha}((0,T^{n}) \times \R^{d}) }+[U^{n} \mid_{x_{0}=0} ]_{G^{\eta}(\R^{d})} \leq \frac{1}{n}$.
In particular, there exist $x^{n},y^{n} \in (0,T^{n}) \times \R^{d}$ such that for $R^{n}:=d(x^{n},y^{n})$, it holds that $(R^{n})^{-\eta}|U_{x^{n}}^n(y^{n})| \geq \frac{1}{2}$.
We consider the re-centered/re-scaled germ $(R^{n})^{-\eta}S_{x^{n}}^{R^{n}}U^{n}$ over 
\begin{equation}
 D^{n}:=\big  ( S_{x^{n}}^{R^{n}} \big )^{-1} \big ( (0,T^{n}) \times \R^{d} \big )=(R^{n})^{-2}(-x^{n}_{0},T^{n}-x^{n}_{0} ) \times \R^{d} \nonumber,  
\end{equation}
and define $u^{n}:D^{n} \mapsto \R$ by $u^{n}:=(R^{n})^{-\eta} \big ( S_{x^{n}}^{R^{n}}U^{n} \big )_{0}$.  To obtain H\"{o}lder bounds, we rely again on Lemma \ref{lemma:Holderbound2}, noting in advance that $\mathfrak{s}_i=1$ only for $i>1$, so the desired property of $D$ is trivially satisfied in the spatial directions. Combining Lemmas \ref{lemma:Rescaling}, \ref{lemma:Holderbound2} and \ref{lemma:Holderbound3} and keeping in mind \eqref{e53}, we find that for all $R>0$ 
\begin{equation}
    [u^{n}]_{C^{\alpha}(B_{R} \cap D^{n})} \lesssim R^{\eta-\alpha}, \qquad  \|\mathcal{L} u^{n}\|_{C^{\alpha-2}(B_{R} \cap D^{n})} \lesssim \frac{1}{n}R^{\eta}, \qquad \sup_{y \in D^{n}, y \neq 0} d(y,0)^{-\eta}|u^{n}(y)| \leq 1 \label{e43}.
\end{equation}
Furthermore, due to the inequality $[(R^{n})^{-\eta}S^{R^{n}}_{x^{n}}U^{n} \mid_{x_{0}=0} ]_{G^{\eta}(\R^{d})} \leq \frac{1}{n}$, the third estimate can be improved along the initial time boundary $y_{0}=-(R^{n})^{-2}x^{n}_{0}$ to
\begin{equation}
    \sup_{y_{1} \in \R^{d}, y \neq 0} |y_{1}|_{\R^{d}}^{-\eta}|u^{n}(-(R^{n})^{-2}x^{n}_{0},y_{1})| \lesssim \frac{1}{n} \label{e44}.
\end{equation}
Furthermore, for $z_{n}:= \big  ( S_{x^{n}}^{R^{n}} \big )^{-1}y^{n}$
it holds $|u^{n}(z^{n})| \geq \frac{1}{2}$.  

\medskip

To obtain compactness, we can choose an extension $\hat{u}^{n}$ of $u^{n}$ to $\R^{d+1}$ while preserving the first uniform bound in \eqref{e43}, i.e., $[\hat u^n]_{C^\alpha(B_R)}\lesssim R^{\eta-\alpha}$.
In fact, by Theorem 2 of \cite{mcshane1934extension}, we may extend $u^n$ for each $R>0$ uniquely to $\overline{B_R\cap D^n}$ such that $[u^{n}]_{C^{\alpha}(\overline{B_{R} \cap D^{n}})}\lesssim [u^{n}]_{C^{\alpha}(B_{R} \cap D^{n})}\lesssim R^{\eta-\alpha}$.
Since $R>0$ is arbitrary, this yields a (unique) continuous continuation of $u^n$ to $\overline{D_n}$.
Then $\hat{u}^n$ can be defined as this continuation on $\overline{D_n}$ and by extending the values on the boundary of the the time-space strip $\overline{D_n}$ forwards and backwards in time, respectively.
By Arzel\`a-Ascoli's Theorem,  we find that $\hat{u}^{n}$ converges (along a subsequence) locally uniformly to a continuous limit $\hat{u}$ on $\R^{d+1}$.

\medskip

We now argue that $\hat{u}$ satisfies $\mathcal{L}\hat{u}=0$ on a suitable domain $D \subseteq \R^{d+1}$.  To this end, first observe that from \eqref{e43}, for all $\varphi \in \mathcal{B}$ and bounded sequences $(y^{n},\lambda^{n}) \in D^{n} \times \R_{+}$ such that $B_{\lambda^{n}}(y^{n}) \subseteq D^{n}$ it holds
\begin{equation}
    \lim_{n \to \infty} \langle \mathcal{L} u^{n},\varphi^{\lambda^{n}}_{y^{n}} \rangle =0. \label{e45}
\end{equation}
This is an immediate consequence of taking $R$ sufficiently large and using the definition of the $C^{\alpha-2}(B_{R} \cap D^{n})$ semi-norm.  Passing to a further subsequence if necessary, we may assume that $-(R^{n})^{-2}x^{n}_{0}$ converges to $T_{1} \in [-\infty, 0]$ and $(R^{n})^{-2}(T^{n}-x^{n}_{0})$ converges to $T_{2} \in [0,\infty]$.  Note if $T_{1}>-\infty$, then by \eqref{e44} it holds that $\hat{u}(T_{1},\cdot)=0$ and hence if $T_{1}=T_{2}=0$, then $\hat{u}(0,\cdot)$ is identically zero.  However, in this case $z_{0}^{n} \to 0$ so that $|\hat{u}(0,z_{1})| \geq \frac{1}{2}$ for some $z_{1} \neq 0$, a contradiction. 

\medskip

Therefore, we may assume $T_{1}<T_{2}$ and we define $D=(T_{1},T_{2}) \times \R^{d}$ and claim that for all $\varphi \in \mathcal{B}$ and $(y,\lambda) \in D \times \R_{+}$ such that $B_{\lambda}(y) \subseteq D$ it holds
\begin{equation}
     \langle \mathcal{L} \hat{u} ,\varphi^{\lambda }_{y} \rangle =0. \label{e46}
\end{equation}
Indeed, this follows easily from \eqref{e45} by choosing a sequence $(y^{n},\lambda^{n}) \in D^{n} \times \R_{+}$ converging to $(y,\lambda)$ and satisfying $B_{\lambda^{n}}(y^{n}) \subseteq D^{n}$.  Using the compact support of $\varphi$, the local uniform convergence of $\hat{u}^{n}$ (which agrees with $u^{n}$ on the support of $\varphi^{\lambda^{n}}_{y^{n}}$) to $\hat{u}$, and the pointwise convergence of $\mathcal{L}^{*}\varphi^{\lambda^{n}}_{y^{n}}$ to $\mathcal{L}^{*}\varphi^{\lambda}_{y}$, we find that sending $n \to \infty$ to \eqref{e45} yields \eqref{e46}.  Using \eqref{e46} to take $\lambda \to 0$ as in the proof of Theorem \ref{thm:continuum}, we find that for all $\varphi \in C^{\infty}_{c}(D)$ it holds $\langle \mathcal{L} \hat{u} ,\varphi \rangle=0$, and therefore by ellipticity of $\mathcal{L}$ we find that $\hat{u}$ is a smooth solution to $\mathcal{L}\hat{u}=0$ in $D$.

\medskip

We now contradict Liouville's theorem.
On one hand, by construction (and possibly passing to a further subsequence), there exists $z \in \overline{D}$ with $d(z,0)=1$ such that $|\hat{u}(z)| \geq \frac{1}{2}$, and in particular $\hat{u}$ cannot be identically zero in $D$ (if $z \in \partial D$ this would contradict continuity of $\hat{u}$ on $\R^{d+1}$). 
On the other hand, we have that $\hat{u}$ is a classical solution to $\mathcal{L}\hat{u}=0$ in $D$, which by the third point in \eqref{e43} and the local uniform convergence satisfies the growth constraint $|\hat{u}(y)| \leq d(y,0)^{\eta}$ for $\eta \in (1,2)$.
We claim that for $T_{1}<T_{2}$ this violates Liouville's theorem.
Indeed, if $T_{1}>-\infty$, then again by \eqref{e44} it holds that $\hat{u}(T_{1}, \cdot )=0$ and therefore $\hat{u}(x_{0},\cdot)$ is identically zero for all $x_{0} \in (T_1,T_{2})$ by standard uniqueness results for the heat equation for solutions with polynomial growth, see Theorem 7 in Chapter 2.3 of \cite{Eva10}.
If instead $T_{1}=-\infty$, then $\hat{u}$ is an ancient solution to the heat equation, so Liouville's theorem for ancient solutions (see \cite{Nic37}, or \cite[Theorem 1.2(b)]{LiZ19} for a more modern exposition) implies $\hat{u}$ is a restriction of a polynomial to $D$, which gives a contradiction. 
\end{proof}

\subsection{The Discrete Laplacian on $\epsilon \Z^{d}$}
In this section, we consider the necessary modifications of Simon's method to analyze the discrete setting $\epsilon \Z^{d} \subseteq \R^{d}$.  For a detailed study of discrete approximations in singular SPDE, we refer the reader to the works \cite{hairer2018discretisations}, \cite{erhard2019discretisation}, and \cite{martin2019paracontrolled}.   We choose $\mathfrak{s}=(1,\cdots,1)$ and take the principal operator to be the discrete Laplacian,
defined for $u: \epsilon \Z^{d} \mapsto \R$ by
\begin{equation}
    y \in \epsilon \Z^{d} \mapsto \Delta_{\epsilon}u(y):=\epsilon^{-2} \sum_{z \in \epsilon \Z^{d}, d(y,z)=\epsilon}(u(z)-u(y)) \label{e21}.
\end{equation}
For $0<\alpha<\eta$, the $G^{\eta}(\epsilon \Z^{d})$ and $G^{\eta,\alpha}(\epsilon \Z^{d})$ semi-norms have already been defined in \eqref{e30} and \eqref{e41}.
However, since $\epsilon \Z^{d}$ is not an open subset of $\R^{d}$, we have not yet defined the $G^{\gamma}(\epsilon \Z^{d})$ semi-norm for $\gamma<0$.
We define this in analogy with the discrete negative H\"{o}lder norm used in \cite{hairer2018discretisations}.  Namely, we define the bilinear form $\langle \cdot, \cdot \rangle_{\epsilon}$ via
\begin{equation}
    \langle f,g\rangle_{\epsilon}:=\epsilon^{d}\sum_{x \in \epsilon \Z^{d}}f(x) g(x), \nonumber
\end{equation}
and for a germ $V$ over $\epsilon \Z^{d}$ we set
\begin{equation}
[V]_{G^{\gamma}(\epsilon \Z^{d})}:=\sup_{\varphi \in \mathcal{B}}\sup_{x \in \epsilon \Z^{d},\lambda \geq \epsilon} \lambda^{-\gamma}\big |\langle V_{x},\varphi_{x}^{\lambda} \rangle_{\epsilon} \big | \label{e24}.
\end{equation}
In comparison to the continuous setting, we require a further centering assumption on the germs.  We say that a germ $U$ on $\epsilon \Z^{d}$ is \textit{centered} (to order one) provided that $U_{x}(x)=0$ and $D_{j}U_{x}(x)=0$ for all $x\in \epsilon \Z^{d}$ and $j \in [d]$, where $D_{j}$ is the forward difference operator on $\epsilon \Z^{d}$, i.e.~$D_{j}U_{x}(x)=\epsilon^{-1} \big (U_{x}(x+\epsilon \mathbf{e}_{j})-U_{x}(x) \big )$.\footnote{In the continuum setting, the centering condition is automatically implied whenever the $G^{\eta}(\R^{d})$ norm is finite for $\eta \in (1,2)$, where $D_{j}$ is replaced by the continuum directional derivative $\partial_{j}$.} The Schauder estimate for germs is now stated as follows.
\begin{theorem}\label{thm:discrete}
Let $0<\alpha<1<\eta<2$.  There exists a constant $C>0$ such that for all $\epsilon>0$ and centered germs $U$ on $\epsilon \Z^{d}$ with $\|U\|_{G^{\eta}(\epsilon \Z^{d})}<\infty$ it holds 
\begin{equation}
    \|U\|_{G^{\eta}(\epsilon \Z^{d})} \leq C \big (  [\Delta_{\epsilon}U]_{G^{\eta-2}(\epsilon \Z^{d})}+[U]_{G^{\eta,\alpha}(\epsilon \Z^{d})} \big ) \label{e4}.
\end{equation}
\end{theorem}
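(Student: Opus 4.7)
The plan is to run Simon's blow-up argument from the proof of Theorem~\ref{thm:continuum}, with two modifications specific to the lattice setting: carrying along the varying lattice scale $\eps_n$ and replacing the continuum Liouville principle by its discrete counterpart whenever the rescaled lattice does not collapse to the continuum.

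Suppose \eqref{e4} fails. By homogeneity there exist $\eps_n>0$ and centered germs $U^n$ on $\eps_n\Z^d$ with $\|U^n\|_{G^\eta(\eps_n\Z^d)}=1$ and $[\Delta_{\eps_n}U^n]_{G^{\eta-2}(\eps_n\Z^d)}+[U^n]_{G^{\eta,\alpha}(\eps_n\Z^d)}\le 1/n$, together with concentration points $x^n,y^n\in\eps_n\Z^d$ such that $R_n:=d(x^n,y^n)$ satisfies $R_n^{-\eta}|U^n_{x^n}(y^n)|\ge 1/2$. Since $x^n\neq y^n$ both lie on $\eps_n\Z^d$, necessarily $R_n\ge\eps_n$, so $\delta_n:=\eps_n/R_n\in(0,1]$. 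The rescaled function $u^n:=R_n^{-\eta}(S^{R_n}_{x^n}U^n)_0$ lives on $\delta_n\Z^d$ (the lattice alignment uses $x^n\in\eps_n\Z^d$), and a direct calculation shows that the centering $D_jU^n_{x^n}(x^n)=0$ rescales to $D_j^{\delta_n}u^n(0)=0$; moreover $u^n(0)=0$ and $|u^n(z^n)|\ge 1/2$ at $z^n:=(S^{R_n}_{x^n})^{-1}y^n$ with $d(z^n,0)=1$.

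The scaling identities of Lemma~\ref{lemma:Rescaling}---which extend to the discrete negative semi-norm~\eqref{e24} by the same change-of-variables as in~\eqref{e49}---combined with Lemma~\ref{lemma:Holderbound2} applied to $D=\delta_n\Z^d$ (which satisfies the domain hypothesis because $d(x,y)$ is a lattice multiple of $\delta_n$ whenever $x,y\in\delta_n\Z^d$) yield the uniform bounds
\begin{equation*}
[u^n]_{C^\alpha(B_R\cap\delta_n\Z^d)}\lesssim R^{\eta-\alpha},\quad \sup_{y\in\delta_n\Z^d\setminus\{0\}}d(y,0)^{-\eta}|u^n(y)|\le 1,\quad \bigl|\langle\Delta_{\delta_n}u^n,\varphi_y^\lambda\rangle_{\delta_n}\bigr|\le\tfrac{1}{n}\lambda^{\eta-2},
\end{equation*}
the last for $\varphi\in\mathcal{B}$, $y\in\delta_n\Z^d$, and $\lambda\ge\delta_n$. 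I would then extend each $u^n$ from $\delta_n\Z^d$ to a continuous function $\hat u^n$ on $\R^d$ preserving local $C^\alpha$ bounds up to universal constants (via the McShane--Whitney extension on each ball as in Theorem~\ref{thm:IVP}, or by piecewise multilinear interpolation on lattice cubes), and invoke Arzel\`a--Ascoli to pass to a subsequence with locally uniform limit $\hat u:\R^d\to\R$ satisfying $|\hat u(y)|\le d(y,0)^\eta$, $\hat u(0)=0$, and $|\hat u(z)|\ge 1/2$ for some $z$ with $d(z,0)=1$. Along a further subsequence, $\delta_n\to\delta_\infty\in[0,1]$.

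The final step is to pass to the limit in the weak Laplace bound via the discrete summation by parts $\langle\Delta_{\delta_n}u^n,\varphi\rangle_{\delta_n}=\langle u^n,\Delta_{\delta_n}\varphi\rangle_{\delta_n}$. When $\delta_\infty=0$, $\Delta_{\delta_n}\varphi\to\Delta\varphi$ uniformly and the Riemann sum $\langle\cdot,\cdot\rangle_{\delta_n}$ tends to the Lebesgue integral, so $\Delta\hat u=0$ distributionally on $\R^d$; by ellipticity $\hat u$ is smooth and harmonic, and classical Liouville combined with the subquadratic growth ($\eta<2$) and $\hat u(0)=0$ forces $\hat u\equiv 0$, contradicting $|\hat u(z)|\ge 1/2$. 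When $\delta_\infty>0$, the lattice point $z^n=\delta_n k^n$ with $k^n\in\Z^d$ and $|k^n|_1=1/\delta_n$ bounded stabilizes along a further subsequence, so $z\in\delta_\infty\Z^d$; a careful passage of the Riemann sums with mesh $\delta_n\to\delta_\infty$ yields $\Delta_{\delta_\infty}\hat u=0$ pointwise on $\delta_\infty\Z^d$. A discrete Liouville theorem on $\delta_\infty\Z^d$ (discrete harmonic functions of polynomial growth below degree $2$ are affine, cf.\@ the classical theory of discrete harmonic polynomials) gives $\hat u(y)=b\cdot y$ on $\delta_\infty\Z^d$; combining with the preserved centering $\hat u(\delta_\infty e_j)=0$ (which follows from $D_j^{\delta_n}u^n(0)=0$ via $\delta_n e_j\to\delta_\infty e_j$ and locally uniform convergence) forces $b=0$, again contradicting $|\hat u(z)|\ge 1/2$. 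The main obstacle, beyond verifying that the scaling identities and the Riemann-sum/finite-difference pairings pass to the limit in both regimes, is the availability of an appropriate discrete Liouville theorem together with the observation that the centering assumption, while redundant in the continuum limit, is essential to rule out affine discrete-harmonic solutions compatible with the growth bound.
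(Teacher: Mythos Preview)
Your approach is essentially the paper's: the same blow-up, the same dichotomy on whether the rescaled lattice width $\delta_n$ tends to $0$ or stays positive, the same use of the centering condition to kill the affine piece in the discrete Liouville step. The paper first reduces to $\eps=1$ and then blows up (so the rescaled width is $1/R_n$ with $R_n\in\N$), whereas you carry the varying $\eps_n$ through and work with $\delta_n=\eps_n/R_n$; this is cosmetic since $R_n$ is an integer multiple of $\eps_n$ either way, so $\delta_n\in\{1,1/2,1/3,\dots\}$ and in the case $\delta_\infty>0$ the sequence is eventually constant.

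One small overstatement worth fixing: the displayed bound $\bigl|\langle\Delta_{\delta_n}u^n,\varphi_y^\lambda\rangle_{\delta_n}\bigr|\le n^{-1}\lambda^{\eta-2}$ does \emph{not} follow from the scaling of the $G^{\eta-2}$ semi-norm alone when $y\neq 0$, because that semi-norm only controls $\langle(\Delta_{\delta_n}U^n)_y,\varphi_y^\lambda\rangle_{\delta_n}$ with matching base-point and test-function center, whereas $u^n=(U^n)_0$. You need to insert the three-point/change-of-base-point correction coming from $[U^n]_{G^{\eta,\alpha}}\le 1/n$ (exactly the computation the paper records around \eqref{e25}, i.e.\ the discrete analogue of Lemma~\ref{lemma:Holderbound3}) before you can conclude that the pairing tends to zero for each fixed $y,\lambda$. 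With that adjustment your argument goes through.
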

As with the proof of Theorem \ref{thm:wholespace}, the scaling of the above semi-norms plays an important role in establishing Theorem \ref{thm:discrete} and we need the following discrete analogue of \eqref{e53}, which can be proved based on a change of variables and the identity
\begin{equation}
 (\Delta_{\epsilon R^{-1}}S_{w}^{R}U)_{x}=R^{2}(\Delta_{\epsilon}U)_{S^{R}_{w}x} \circ S^{R}_{w} \nonumber.
\end{equation}

\begin{lemma}\label{lemma:rescaling4}
Let $R>0$, $w \in \epsilon \Z^{d}$, and $\gamma<0$. Then for all germs $U$ over $\epsilon \Z^{d}$ 
\begin{equation}
 [\Delta_{\epsilon R^{-1}}S^{R}_{w}U]_{G^{\gamma}(\epsilon R^{-1}\Z^{d})}=R^{2+\gamma}[\Delta_{\epsilon}U]_{G^{\gamma}(\epsilon  \Z^{d})} \label{e7}.
\end{equation}
\end{lemma}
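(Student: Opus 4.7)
The strategy is to follow the same template as the proof of \eqref{e40} in Lemma \ref{lemma:Rescaling}, replacing the continuum pairing by the discrete inner product $\langle \cdot, \cdot\rangle_{\epsilon}$, and then tracking how the discrete Laplacian interacts with the rescaling $S^{R}_{w}$. The key input is the pointwise identity already flagged in the paper,
\begin{equation*}
(\Delta_{\epsilon R^{-1}}S^{R}_{w}U)_{x}(y)=R^{2}(\Delta_{\epsilon}U)_{S^{R}_{w}x}(S^{R}_{w}y),
\end{equation*}
so as a first step I would verify this directly from the definition \eqref{e21}: the prefactor rescales as $(\epsilon R^{-1})^{-2}=R^{2}\epsilon^{-2}$, and because $d(S^{R}_{w}y',S^{R}_{w}y)=Rd(y',y)$, the set of $\epsilon R^{-1}$-neighbors of $y$ in $\epsilon R^{-1}\Z^{d}$ is mapped bijectively by $S^{R}_{w}$ onto the set of $\epsilon$-neighbors of $S^{R}_{w}y$ in $\epsilon\Z^{d}$ (using $w \in \epsilon\Z^{d}$ so that $S^{R}_{w}$ sends $\epsilon R^{-1}\Z^{d}$ to $\epsilon\Z^{d}$).

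Next I would insert this identity into the discrete pairing and change summation variable $y'=S^{R}_{w}y$, exactly as in \eqref{e49}:
\begin{align*}
\langle (\Delta_{\epsilon R^{-1}}S^{R}_{w}U)_{x},\varphi_{x}^{\lambda}\rangle_{\epsilon R^{-1}}
&=(\epsilon R^{-1})^{d}\sum_{y\in\epsilon R^{-1}\Z^{d}}R^{2}(\Delta_{\epsilon}U)_{S^{R}_{w}x}(S^{R}_{w}y)\,\varphi_{x}^{\lambda}(y) \\
&=R^{2}\epsilon^{d}\sum_{y'\in\epsilon\Z^{d}}(\Delta_{\epsilon}U)_{S^{R}_{w}x}(y')\,(\varphi_{x}^{\lambda})\!\circ\!(S^{R}_{w})^{-1}(y') \cdot R^{-d}\cdot R^{d},
\end{align*}
where the $R^{d}$ accounts for the Jacobian $(\epsilon R^{-1})^{d}/\epsilon^{d}\cdot R^{d}$ hidden in the change of variable (with $\mathfrak{s}=(1,\ldots,1)$). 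Applying the same computation of $(\varphi_{x}^{\lambda})^{R}_{w}=\varphi^{R\lambda}_{w+R^{\mathfrak{s}}x}$ used in the proof of Lemma \ref{lemma:Rescaling}, the combinatorial prefactors collapse and one obtains
\begin{equation*}
\langle (\Delta_{\epsilon R^{-1}}S^{R}_{w}U)_{x},\varphi_{x}^{\lambda}\rangle_{\epsilon R^{-1}}
=R^{2}\,\langle (\Delta_{\epsilon}U)_{S^{R}_{w}x},\varphi^{R\lambda}_{w+R^{\mathfrak{s}}x}\rangle_{\epsilon}.
\end{equation*}

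Finally I would take the supremum defining \eqref{e24}. Substituting $\lambda'=R\lambda$ and $x'=S^{R}_{w}x$, the constraint $\lambda\ge \epsilon R^{-1}$ becomes $\lambda'\ge \epsilon$, and the map $x\mapsto x'$ is a bijection from $\epsilon R^{-1}\Z^{d}$ to $\epsilon\Z^{d}$ (here we again use $w\in\epsilon\Z^{d}$). The factor $\lambda^{-\gamma}=R^{\gamma}(\lambda')^{-\gamma}$ combined with the $R^{2}$ above yields the prefactor $R^{2+\gamma}$, and what remains is precisely $[\Delta_{\epsilon}U]_{G^{\gamma}(\epsilon\Z^{d})}$. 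Since the argument uses only bijections and equalities, both directions of the inequality come for free and give the claimed identity.

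\textbf{Main obstacle.} There is no conceptual difficulty; the only delicate point is the accounting of the various powers of $R$ and $\epsilon$ arising from (i) the prefactor in $\Delta_{\epsilon R^{-1}}$, (ii) the lattice spacing in $\langle\cdot,\cdot\rangle_{\epsilon R^{-1}}$ versus $\langle\cdot,\cdot\rangle_{\epsilon}$, (iii) the change of test-function scale via \eqref{e48}, and (iv) the substitution $\lambda'=R\lambda$ in the supremum. Making sure the admissibility constraints $\lambda\ge\epsilon R^{-1}$ and $\lambda'\ge\epsilon$ correspond exactly under $\lambda'=R\lambda$ is the one bookkeeping point that deserves explicit mention.
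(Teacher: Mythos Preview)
Your proposal is correct and follows exactly the approach indicated in the paper, namely the pointwise identity $(\Delta_{\epsilon R^{-1}}S^{R}_{w}U)_{x}=R^{2}(\Delta_{\epsilon}U)_{S^{R}_{w}x}\circ S^{R}_{w}$ combined with a discrete change of variables mirroring the computation \eqref{e49}. The only cosmetic issue is the displayed line with the stray factor $R^{-d}\cdot R^{d}$, where the $R^{d}$ really belongs to the subsequent identification $(\varphi_{x}^{\lambda})\circ(S^{R}_{w})^{-1}=R^{d}\varphi^{R\lambda}_{S^{R}_{w}x}$ rather than to the change of summation variable; cleaning this up would make the bookkeeping transparent.
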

We now comment briefly on the proof of Theorem \ref{thm:discrete}.  By Lemma \ref{lemma:Rescaling} and Lemma \ref{lemma:rescaling4}, we can reduce to the case $\epsilon=1$.  The basic strategy is then the same as the proof of Theorem \ref{thm:continuum}.  The difference now is that after zooming in near a point of concentration, we will have a sequence of functions on lattices of variable width.  If the width stays bounded away from zero, we will extract (along a subsequence) a limiting discrete harmonic function and contradict the discrete Liouville theorem on $\Z^{d}$.
On the other hand, if the lattice spacing goes to zero, we will use the extension operators from \cite{martin2019paracontrolled} to extract a limit and contradict Liouville's theorem in the continuum, as in Theorem \ref{thm:continuum}.

\begin{proof}[Proof of Theorem \ref{thm:discrete}]
By re-scaling and using Lemma \ref{lemma:rescaling4} we may assume $\epsilon=1$.  Indeed, assume the estimate \eqref{e4} is established for arbitrary centered germs on the unit lattice.  Given a centered germ $U$ over $\epsilon \Z^{d}$, applying \eqref{e4} to the centered germ $S^{\epsilon}_{0}U$
over $\Z^{d}$ yields the desired bound in light of Lemmas \ref{lemma:Rescaling} and \ref{lemma:rescaling4}.  

\medskip

If the statement is false for $\epsilon=1$, there exists a sequence of germs $\tilde{U}^{n}$ on $\Z^{d}$ and  $x^{n},y^{n} \in \Z^{d}$ with $x^n\ne y^n$ such that for $R^{n}:=d(x^{n},y^{n})$, it holds that
\begin{align}
    \|\tilde{U}^{n}\|_{G^{\eta}(\Z^{d})}=1, \qquad (R^{n})^{-\eta}|\tilde{U}_{x_{n}}(y_{n})| \geq \frac{1}{2}, \qquad [\Delta_{1}\tilde{U}^{n}]_{G^{\eta-2}(\Z^{d})}+[\tilde{U}^{n}]_{G^{\eta,\alpha}(\Z^{d})} \leq \frac{1}{n} \label{e5}.
\end{align}
We define $\epsilon^{n}:=(R^{n})^{-1}$ and a new sequence of germs $U^{n}:=(R^{n})^{-\eta}S^{R^{n}}_{x^{n}}\tilde{U}$ on $\epsilon^{n}\Z^{d}$.
By Lemma \ref{lemma:Rescaling}\ref{lemma:rescaling4}, the identity \eqref{e5} now holds with $\Z^{d}$ replaced by $\epsilon^{n} \Z^{d}$, that is
\begin{align}
    \|U^{n}\|_{G^{\eta}(\epsilon^{n}\Z^{d})}=1, \qquad |U^{n}_{0}(z^{n})| \geq \frac{1}{2}, \qquad [\Delta_{\epsilon^{n}} U^{n}]_{G^{\eta-2}(\epsilon^{n}\Z^{d})}+[U^{n}]_{G^{\eta,\alpha}(\epsilon^{n}\Z^{d})} \leq \frac{1}{n} \label{e23},
\end{align}
where $z^{n}:=(R^{n})^{-1}(y^{n}-x^{n})$.  We now define a sequence of functions on $\epsilon^{n}\Z^{d}$ by $u^{n}:=U_{0}^{n}$ .  Note that by \eqref{e23}, for all $z \in \epsilon^{n}\Z^{d}$ it holds
\begin{equation}
    |u^{n}(z)| \leq d(0,z)^{\eta} \label{e19}.
\end{equation}
To obtain a uniform H\"{o}lder bound, we apply  Lemma \ref{lemma:Holderbound2}, noting that $\epsilon^{n}\Z^{d}$ has the property  that for $x,y \in \epsilon^{n} \Z^{d}$ it holds $d(x,y) \in \epsilon^{n} \Z$, cf. \eqref{e35} and hence $y+d(x,y)e_{i} \in \epsilon^{n} \Z^{d}$ for all $i \in [d]$.  Thus, we find that for all $R>0$
\begin{equation}
    [u^{n}]_{C^{\alpha}(\epsilon^{n}\Z^{d} \cap B_{R})} \lesssim R^{\eta-\alpha}.\label{e47}
\end{equation}

Before extracting a suitable limit, we now argue that for $\varphi \in \mathcal{B}$, $y \in \epsilon^{n}\Z^{d}$ and $\lambda \geq \epsilon^{n}$, 
\begin{equation}
    \lim_{n \to \infty} \langle \Delta_{\epsilon^{n}}u^{n}, \varphi^{\lambda}_{y} \rangle_{\epsilon^{n}}=0 \label{e25}.
\end{equation}
In fact, this is a straightforward adaptation of the corresponding argument in the proof of Lemma \ref{lemma:Holderbound3}, which we give for the convenience of the reader.
\begin{equation}
    \langle \Delta_{\epsilon^{n}} u^{n}, \varphi_{y}^{\lambda} \rangle_{\epsilon^{n}}= \langle \Delta_{\epsilon^{n}} U^{n}_{0}, \varphi_{y}^{\lambda} \rangle_{\epsilon^{n}}=\langle \Delta_{\epsilon^{n}} U^{n}_{y}, \varphi_{y}^{\lambda} \rangle_{\epsilon^{n}}+\langle \Delta_{\epsilon^{n}} (U^{n}_{0}-U^{n}_{y}), \varphi_{y}^{\lambda} \rangle_{\epsilon^{n}}.
\end{equation}
By definition \eqref{e24} together with \eqref{e23}, it holds $|\langle \Delta_{\epsilon^{n}} U^{n}_{y}, \varphi_{y}^{\lambda} \rangle_{\epsilon^{n}}| \leq \frac{1}{n}\lambda^{\eta-2} \to 0$. For the second term, 
\begin{align}
  \big |\langle \Delta_{\epsilon^{n}} (U^{n}_{0}-U^{n}_{y}), \varphi_{y}^{\lambda} \rangle_{\epsilon^{n}} \big |=&\bigg |(\epsilon^{n})^{d}\sum_{z \in \epsilon^{n}\Z^{d}} (U_{0}^{n}(z)-U_{0}^{n}(y)-U^{n}_{y}(z)+U_{y}(y) ) \Delta_{\epsilon_{n}} \varphi_{y}^{\lambda}(z) \bigg |  \nonumber \\
  &\leq [U^{n}]_{G^{\eta,\alpha}(\epsilon^{n}\Z^{d})} (\epsilon^{n})^{d} \sum_{z \in \epsilon^{n}\Z^{d}} d(z,y)^{\alpha} \big (d(0,y)+d(z,y) \big )^{\eta-\alpha} |\Delta_{\epsilon^{n}} \varphi_{y}^{\lambda}(z)| \to 0 \nonumber.
\end{align}

\medskip

We now proceed to extract a limit and note that since $R^{n} \in \Z$,  $\epsilon^{n}$ is a bounded sequence admitting a subsequential limit $\epsilon^{\infty}$ (not relabelled).  There are now two cases to consider:  $\epsilon^{\infty}=0$ and  $\epsilon^{\infty}>0$. 

\medskip

We start with the case $\epsilon^{\infty}=0$ and define an extension $\hat{u}^{n}$ of $u^{n}$ from $\epsilon^{n}\Z^{d}$ to $\R^{d}$ while preserving the uniform bound \eqref{e47}.  This can be done using the extension operators defined in \cite{martin2019paracontrolled} (see also the Appendix of \cite{gubinelli2021pde}).  Since $\hat{u}^{n}$ extends $u^{n}$, we find that for all $y \in \epsilon^{n}\Z^{d}$ it holds
\begin{equation}
|\hat{u}^{n}(y)| \leq d(0,y)^{\eta} \label{e26}.
\end{equation}
By \eqref{e47} and Arzel\`a-Ascoli's theorem, it follows that $(\hat{u}^{n})_{n}$ converge locally uniformly (along a subsequence) to a continuous function $\hat{u}$.  We now argue that $\hat{u}$ is harmonic, for which (as in the proof of Theorem \ref{thm:continuum}) it suffices to show that for any $y \in \R^{d}$ and $\lambda>0$ it holds $\langle \Delta u, \varphi_{y}^{\lambda} \rangle=0$.  To this end, we take a sequence $y^{n} \in \epsilon^{n}\Z^{d}$ converging to $y$ and $\lambda^{n} \geq \epsilon^{n}$ converging to $\lambda$, then by \eqref{e25}, using that $\hat{u}^{n}$ agrees with $u^{n}$ on $\epsilon^{n}\Z^{d}$, we have
\begin{equation}
 \lim_{n \to \infty} \langle \Delta_{\epsilon^{n}}\hat{u}^{n},\varphi_{y^{n}}^{\lambda^{n}} \rangle_{\epsilon^{n}}=0 \nonumber.   
\end{equation}
However, by Lebesgue's dominated convergence theorem, it holds that $\langle \Delta_{\epsilon^{n}}\hat{u}^{n},\varphi_{y^{n}}^{\lambda^{n}} \rangle_{\epsilon^{n}}$ converges to $\langle \Delta u, \varphi_{y}^{\lambda} \rangle$, yielding that $\hat{u}$ is harmonic (as in the proof of Theorem \ref{thm:continuum}).  To obtain a contradiction, we only need to argue the growth bound: for all $y \in \R^{d}$ it holds
\begin{equation}
    |\hat{u}(y)| \leq  d(0,y)^{\eta} \nonumber.
\end{equation}
Indeed, this follows from choosing $y^{n} \in \epsilon^{n}\Z^{d}$ converging to $y$, then passing to the limit on both sides of \eqref{e26}, using the uniform convergence of $\hat{u}^{n}$ to the continuous limit $\hat{u}$.  Hence, $\hat{u}$ must be zero by Liouville's theorem, but $|\hat{u}^{n}(z^{n})|=|u^{n}(z^{n})| \geq \frac{1}{2}$, yielding a contradiction.

\medskip

We now turn to the case $\epsilon^{\infty}>0$, which in particular means that $\epsilon^n=\epsilon^\infty$ for sufficiently large $n$ due to $R^n=(\epsilon^n)^{-1} \in \Z$.
We start by extending to $\R^{d}$ as above and extract a limit $\hat{u}$ as before.  In this case, we find that for all $y \in \epsilon^{\infty}\Z^{d}$ and $\lambda \geq \epsilon^{\infty}$, it holds that $\langle \Delta_{\epsilon^{\infty}} \hat{u},\varphi^{\lambda}_{y}\rangle_{\epsilon^{\infty}}=0$.  Since $\varphi \in \mathcal{B}$ and $\mathfrak{s}=(1,\cdots, 1)$, it is supported in the unit $L^{1}$ ball on $\Z^{d}$ with $\varphi(0)=1$, so setting $\lambda=\epsilon^{\infty}$, we find that $\langle \Delta_{\epsilon^{\infty}} \hat{u},\varphi^{\lambda}_{y}\rangle_{\epsilon_{\infty}}=\Delta_{\epsilon^{\infty}} \hat{u}(y) \varphi^{\lambda}(0)=\Delta_{\epsilon^{\infty}} \hat{u}(y) (\epsilon^{\infty})^{-d}$, and hence the restriction of $\hat{u}$ to $\epsilon^{\infty}\Z^{d}$ is discretely harmonic.  Moreover, it holds $|u(y)| \leq d(0,y)^{\eta}$ and by the centering condition on the approximating germ, $u$ vanishes on the vertices of the standard simplex in $\epsilon^\infty\Z^d$, so by the discrete Liouville theorem on $\epsilon^{\infty}\Z^{d}$ in Lemma \ref{js1000}, we obtain $u=0$.
But $|u^n(z^n)|\ge \frac12$ for all $n$, where $z^n\in \epsilon^n\Z^d=\epsilon^\infty\Z^d$ fulfill $d(0,z^n)=1$.
Passing to a subsequential limit $z\in \epsilon^\infty\Z^d$, we arrive at the contradiction $|u(z)|\ge \frac12$.
\end{proof}

\section{Some Further Extensions}\label{sec:further}
In this section, we explore some generalizations of the results in Section \ref{sec:illustration} to further showcase the flexibility of Simon's method.

\subsection{Elliptic Operators with Constant Coefficients}\label{sec:generalOperators}

We now generalize Theorem \ref{thm:continuum} from $\Delta$ to an arbitrary scaling homogeneous elliptic operator $\mathcal{L}$ with constant coefficients.

\begin{theorem}\label{thm:wholespace}
Let $m\in\N$ and let $\mathcal{L}=\sum_{|\gamma|=m} a_\gamma \partial^\gamma$ be an elliptic operator of anisotropic order $m$.
Suppose $0<\alpha<\eta<m$ with $\alpha,\eta \notin \N$.
There exists a constant $C$ such that for all germs $U$ with $\|U\|_{G^\eta(\R^{d})}<\infty$ it holds
\begin{equation}
\|U\|_{G^\eta(\R^d)} \leq C \big (  [\mathcal{L}U]_{G^{\eta-m}(\R^d)}+[U]_{G^{\eta,\alpha}(\R^d)} \big ). \label{G1}
\end{equation}
\end{theorem}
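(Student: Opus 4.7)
The plan is to mimic the blow-up argument of Theorem \ref{thm:continuum} in the more general setting of $\mathcal{L}$ and exponents $0<\alpha<\eta<m$. As flagged in Remark \ref{remark:hypoelliptic}, two ingredients must be upgraded: a higher-order version of the H\"older bound from Lemma \ref{lemma:Holderbound2}, and a Liouville principle for $\mathcal{L}$ allowing growth orders up to $\eta<m$.

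The setup mirrors the proof of Theorem \ref{thm:continuum}. Assuming the estimate fails, one extracts germs $U^{n}$ with $\|U^{n}\|_{G^{\eta}(\R^{d})}=1$ and $[\mathcal{L}U^{n}]_{G^{\eta-m}(\R^{d})}+[U^{n}]_{G^{\eta,\alpha}(\R^{d})}\le\tfrac{1}{n}$, picks concentration points $x^{n},y^{n}$ with $R^{n}:=d(x^{n},y^{n})$ satisfying $(R^{n})^{-\eta}|U^{n}_{x^{n}}(y^{n})|\ge\tfrac{1}{2}$, and forms $u^{n}:=(R^{n})^{-\eta}(S^{R^{n}}_{x^{n}}U^{n})_{0}$. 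Lemma \ref{lemma:Rescaling} and \eqref{e53} transfer the three norm bounds to $u^{n}$; in particular $\sup_{y\ne 0}d(0,y)^{-\eta}|u^{n}(y)|\le 1$, $\|\mathcal{L}u^{n}\|_{C^{-m}(B_{R})}\lesssim R^{\eta}/n$ via Lemma \ref{lemma:Holderbound3} (which is already valid throughout $\alpha<\eta<m$), and $|u^{n}(z^{n})|\ge\tfrac{1}{2}$ for $z^{n}:=(S^{R^{n}}_{x^{n}})^{-1}y^{n}$ with $d(0,z^{n})=1$.

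The main technical novelty is the generalized H\"older bound, i.e.\ the version of Lemma \ref{lemma:Holderbound2} announced in Remark \ref{remark:hypoelliptic} as Lemma \ref{lemma:Holderbound_gen}. Since the polynomial $P$ in \eqref{e41} now has anisotropic degree up to $\lfloor\eta\rfloor$, one must inductively control all of its coefficients $\gamma^{\beta}_{x}(y)$ with $|\beta|\le\lfloor\eta\rfloor$. Following Step~6 of Proposition~2 in \cite{otto2021priori}, I would evaluate \eqref{e41} at a family of test points obtained by translating $y$ along unit coordinate directions $e_{i}$ with $\mathfrak{s}_{i}=1$ by integer multiples of $d(x,y)$, and then invert the resulting Vandermonde-type finite-difference system degree by degree to recover each $\|\gamma^{\beta}\|_{G^{\eta-|\beta|}}\lesssim\|U\|_{G^{\eta}}+[U]_{G^{\eta,\alpha}}$. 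Summing over $\beta$ produces
\begin{equation*}
\sup_{x\in D}[U_{x}]_{C^{\alpha}(D\cap B_{R}(x))}\lesssim\bigl(\|U\|_{G^{\eta}(D)}+[U]_{G^{\eta,\alpha}(D)}\bigr)R^{\eta-\alpha},
\end{equation*}
which on $D=\R^{d}$ supplies the uniform local $C^{\alpha}$ bound on $u^{n}$ required for Arzel\`a--Ascoli.

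From here the argument runs in parallel with Theorem \ref{thm:continuum}: a subsequential locally uniform limit $u:\R^{d}\to\R$ satisfies $|u(y)|\le d(0,y)^{\eta}$ and $\mathcal{L}u=0$ in $\mathcal{D}'(\R^{d})$, and ellipticity promotes $u$ to a smooth function. The Liouville principle for constant-coefficient elliptic $\mathcal{L}$ -- which follows from the fact that $p(i\xi)\hat{u}=0$ with the symbol $p$ non-vanishing off the origin forces $\supp\hat{u}\subset\{0\}$, so a polynomially bounded $u$ must be a polynomial -- then forces $u$ to be a polynomial of anisotropic degree at most $\lfloor\eta\rfloor$. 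Because $\eta\notin\N$, the growth bound $|u(y)|\le d(0,y)^{\eta}$ tested along $y=t^{\mathfrak{s}}v$ as $t\to 0$ forces every coefficient of $u$ to vanish by comparing homogeneities, contradicting $|u(z)|\ge\tfrac{1}{2}$ at a subsequential limit point $z$ of $z^{n}$. I expect the principal obstacle to be the induction underlying Lemma \ref{lemma:Holderbound_gen}, where one must choose enough evaluation points with a non-degenerate finite-difference matrix to invert for each successive layer of coefficients; the rest of the proof is a relatively mechanical rerun of Theorem \ref{thm:continuum}.
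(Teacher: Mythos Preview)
Your overall skeleton is correct and matches the paper: blow-up, rescale, Arzel\`a--Ascoli via the generalized H\"older bound (Lemma~\ref{lemma:Holderbound_gen}), pass to the limit, and contradict the Liouville theorem for $\mathcal{L}$ (which the paper states as Lemma~\ref{js100} and proves exactly as you sketch via the Fourier support argument). The use of $\eta\notin\N$ to kill the polynomial is also right.

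There is, however, a genuine gap in your plan for Lemma~\ref{lemma:Holderbound_gen}. You propose to recover the coefficients of $P_{xy}$ by translating $y$ only along directions $e_i$ with $\mathfrak{s}_i=1$. This is the correct move in the regime $1<\eta<2$ of Lemma~\ref{lemma:Holderbound2}, because there the only nonconstant monomials of anisotropic degree $\le\lfloor\eta\rfloor=1$ are the $(z-y)_i$ with $\mathfrak{s}_i=1$. In the general setting of Theorem~\ref{thm:wholespace}, however, $\lfloor\eta\rfloor$ can exceed some $\mathfrak{s}_i>1$, so $P_{xy}$ may contain monomials in those ``slow'' variables (for instance, with $\mathfrak{s}=(2,1)$ and an elliptic $\mathcal{L}$ of order $m=4$, already $\eta\in(2,4)$ forces a $t$-coefficient into $P_{xy}$). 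Translating only in $\mathfrak{s}_i=1$ directions gives no information about those coefficients, and no Vandermonde system built from such translates can be inverted for them. At minimum you must translate in \emph{all} coordinate directions, with the anisotropically correct displacement $\sim d(x,y)^{\mathfrak{s}_i}$ in direction $e_i$, before any finite-difference inversion can hope to be nondegenerate.

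For comparison, the paper's proof of the coefficient bound (Lemma~\ref{lemma:polynomialbound}) does not invert a Vandermonde system. Instead it evaluates $P_{xy}$ at a single parameterized point $z=y+\sum_j(\eps\rho_j d(x,y))^{\mathfrak{s}_j}e_j$ with free parameters $\eps\in(0,1]$, $\rho\in(0,1]^d$, obtaining for each $\beta$ an inequality that bounds $|\nu_{x\beta}(y)|/d(x,y)^{\eta-|\beta|}$ by the germ norms plus a weighted sum of the other $|\nu_{x\gamma}(y)|/d(x,y)^{\eta-|\gamma|}$. It then chooses weights $\kappa_\beta$ and parameters $(\eps_{|\beta|},\rho_\beta)$ via a lexicographic induction so that, after summing, the cross-terms can be absorbed into the left-hand side. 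This buckling argument sidesteps the nondegeneracy issue you flag at the end, and in particular handles all directions and mixed monomials simultaneously.
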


\begin{example}
Let us give some examples of operators that fit into this framework.
As observed in \cite{simon1997schauder}, an operator $\mathcal{L}=\sum_{|\gamma|=m} a_\gamma \partial^\gamma$ is elliptic if and only if $\sum_{|\gamma|=m}a_\gamma (i \xi)^\gamma \ne 0$ for all $\xi\in\R^d\setminus\{0\}$.
Therefore, the following examples are readily seen to be elliptic.
    \begin{enumerate}
        \item The Laplace operator: $\mathcal{L}=\Delta$, $m=2$ and $\mathfrak{s}:=(1,\ldots,1)$.
        \item The Cauchy-Riemann operator: $\mathcal{L}=\frac12(\partial_{x_1}+i\partial_{x_2})$, $m=1$ and $\mathfrak{s}:=(1,\ldots,1)$,
        or more generally $\mathcal{L}=\alpha\partial_{x_1}+\beta\partial_{x_2}$ with $\alpha,\beta\in\C$ being linearly independent over $\R$.
        \item Heat operator: $\mathcal{L}=\partial_{x_1}-\sum_{j=2}^{d} \partial_{x_j}^2$, $m=2$ and $\mathfrak{s}:=(2,1,\ldots,1)$.
    \end{enumerate}
\end{example}
\begin{remark}\label{remark:classical}
To make a comparison with classical Schauder theory, let $u: \R^{d} \mapsto \R$ be a smooth function and for each base-point $x$, let $z \mapsto Q_{x}(z)$ be in $\mathcal{P}_{\lfloor \eta\rfloor}$. In this case, the germ $U$ defined by $U_{x}:=u-Q_{x}$ satisfies $[U]_{G^{\eta,\alpha}(D) }=0$.  Indeed, for each $x,y \in D$, defining $P_{x,y} \in \mathcal{P}_{\lfloor \eta\rfloor}$ by $P_{x,y}:=Q_{y}-Q_{x}$, we automatically have $ U_{x}-U_{y}-P_{x,y}$ is identically zero.  In particular, this holds if we take $Q_{x}$ to be Taylor jet of $u$ to order $\lfloor \eta\rfloor$ near $x$.
\end{remark}

We now turn to the proof.
\begin{proof}[Proof of Theorem \ref{thm:wholespace}]
The proof follows an identical line of argument as in the proof of Theorem \ref{thm:continuum}, with the following modifications.
First replace $\Delta$ by $\mathcal{L}$ and write $m$ instead of $2$ for the anisotropic order of this operator.
For the compactness argument, we use a generalization of Lemma \ref{lemma:Holderbound2} provided by Lemma \ref{lemma:Holderbound_gen_hyb} (which removes the restriction $0<\alpha<1<\eta<2$).
To verify the limit is a classical solution to $\mathcal{L}u=0$ on $\R^{d}$, we use the fact that if $\langle \mathcal{L} u, \varphi_{y}^{\lambda} \rangle=0$ for all $y \in \R^{d}$, $\lambda>0$ then $u$ is $\mathcal{L}$-harmonic as a distribution and thus a smooth function by ellipticity of $\mathcal{L}$, cf. Theorem 4.1.4 in \cite{Hor83}. 
Finally, the contradiction comes from the generalized Liouville theorem provided below in Lemma \ref{js1000}, since $u$ is polynomially bounded by Lemma \ref{lemma:Holderbound_gen_hyb} and thus a tempered distribution.  The assumption that $\eta \notin \N$ is used to conclude that if $u$ is an $\mathcal{L}$-harmonic polynomial satisfying $|u(z)| \lesssim d(z,0)^{\eta}$, then $u$ is identically zero.  
\end{proof}

We now generalize Lemma \ref{lemma:Holderbound2} from Section \ref{sec:illustration}, removing the restrictions $\alpha<1$ and $\eta<2$.
The argument depends on precise estimates for the higher order polynomial contributions, which we provide below in Lemma \ref{lemma:polynomialbound_hyb}.

\begin{lemma}\label{lemma:Holderbound_gen_hyb}
Let $0<\alpha<\eta$ with $\alpha\notin\N$.
 Let $D\subseteq \R^d$ be such that for all $x,y\in D$ with $x\ne y$ it holds $y+\sum_{j=1}^d (\rho_j d(x,y))^{\mathfrak{s}_j}\mathbf{e}_j\in D$ for all $\rho=(\rho_1,\ldots,\rho_d)\in \N^d$.
Assume that $U$ is a germ with $\|U\|_{G^{\eta}(D)}+[U]_{G^{\eta,\alpha}(D)}<\infty$.
Then for all $x\in D$ and $y\in D\cap B_R(x)$ it holds
 \begin{align}
     \sup_{x\in D}[U_{x}]_{C^{\alpha}(D\cap B_{R}(x))} \lesssim_{\mathfrak{s},d,\eta,\alpha} \big ( \|U\|_{G^{\eta}(D)}+[U]_{G^{\eta,\alpha}(D)} \big ) R^{\eta-\alpha} \label{e14a_hyb}.
 \end{align}
\end{lemma}
\begin{proof}
Fix a base point $x \in D$ and a radius $R>0$.
Since $[U]_{G^{\eta,\alpha}(D)}<\infty$, for each $y \in D\cap B_{R}(x)$ there exists a polynomial $P_{xy} \in \mathcal{P}_{\lfloor\eta\rfloor}$ such that for all $z \in D\cap B_{R}(x)$ we have
\begin{align}\label{e12a_hyb}
\begin{split}
\big |(U_{x}-U_{y}-P_{xy})(z) \big | 
&\leq 2[U]_{G^{\eta,\alpha}(D)}  d(y,z)^{\alpha}\big (d(y,z)+d(y,x) \big )^{\eta-\alpha} \\
& \lesssim [U]_{G^{\eta,\alpha}(D)}  d(y,z)^{\alpha}R^{\eta-\alpha}.
\end{split}
\end{align}
Observe that for $x=y$ we may choose $P_{xx}=0$.
For $x\ne y$ we write $P_{xy}(z)=:\sum_{|\beta|\le \eta} \nu_{x\beta}(y)(z-y)^\beta$.
Since $\|U\|_{G^{\eta}(D)}<\infty$, we have  $|U_{y}(z)| \leq \|U\|_{G^{\eta}(D)}d(y,z)^{\eta} \lesssim \|U\|_{G^{\eta}(D)} d(y,z)^{\alpha}R^{\eta-\alpha}$.
Hence, by the triangle inequality, \eqref{e12a_hyb} turns into
\begin{equation}
\big |U_{x}(z)- P_{xy}^{<\alpha}(z) \big | 
\lesssim \big(  [U]_{G^{\eta,\alpha}(D)}+\|U\|_{G^{\eta}(D)}   \big )d(y,z)^{\alpha}R^{\eta-\alpha}+|P_{xy}^{\ge\alpha}(z-y)|,\label{e13a_hyb}
\end{equation}
where $P_{xy}^{<\alpha}(z):=\sum_{|\beta|< \alpha} \nu_{x\beta}(y)(z-y)^\beta$ and $P_{xy}^{\ge\alpha}:=P_{xy}-P_{xy}^{<\alpha}$.
Observe that $P_{xy}^{<\alpha}\in \mathcal{P}_{\lfloor\alpha\rfloor}$, and that $[f]_{C^\alpha(D\cap B_R(x))}$ is the infimum over all $M>0$ such that for all $y\in D\cap B_R(x)$ there is a $P\in\mathcal{P}_{\lfloor\alpha\rfloor}$ such that for all $z\in D\cap B_R(x)$ it holds $|f(z)-P(z)|\le M d(z,y)^\alpha$, so that it remains only to estimate $P_{xy}^{\ge\alpha}(z)$ in order to establish \eqref{e14a_hyb}.
For this we use
\begin{align*}
 |P_{xy}^{\ge\alpha}(z)| \leq \sum_{\alpha \le |\beta|\le \eta}|\nu_{x\beta}(y)|d(z,y)^{|\beta|} \lesssim \sum_{\alpha\le |\beta|\le \eta} |\nu_{x\beta}(y)| d(z,y)^{\alpha}R^{|\beta|-\alpha}.   
\end{align*}
Lemma \ref{lemma:polynomialbound_hyb} below yields $\sum_{\alpha\le |\beta|\le \eta} |\nu_{x\beta}(y)|\lesssim ([U]_{G^{\eta,\alpha}(D)}+[U]_{G^{\eta}(D)}) R^{\eta-|\beta|}$.
Consequently we have
\begin{align*}
    |P_{xy}^{\ge\alpha}(z)|\lesssim \big(  [U]_{G^{\eta,\alpha}(D)}+[U]_{G^{\eta}(D)}   \big )d(y,z)^{\alpha}R^{\eta-\alpha}.
\end{align*}
Combining this with \eqref{e13a_hyb} gives \eqref{e14a_hyb}.
\end{proof}

\begin{lemma}\label{lemma:polynomialbound_hyb}
 Let $0<\alpha<\eta$.
 Let $D\subseteq \R^d$ and $x,y\in D$, $x\ne y$ be such that $y+\sum_{j=1}^d (\rho_j(d(x,y))^{\mathfrak{s}_j}\mathbf{e}_j\in D$ for all $\rho=(\rho_1,\ldots,\rho_d)\in \N^d$.
 Assume that $U$ is a germ with $\|U\|_{G^\eta(D)}+[U]_{G^{\eta,\alpha}(D)}<\infty$, and suppose that there is $(\nu_\beta)_{|\beta|\le\eta}\subset \R$.
 For $z\in D$ write $P(z):=\sum_{|\beta|\le \eta} \nu_{\beta} (z-y)^\beta$.
 If there is $C\ge 1$ such that for all $z\in D$ it holds
 \begin{align*}
  |(U_x-U_y-P)(z)|\le C[U]_{G^{\eta,\alpha}(D)}d(y,z)^\alpha\big(d(x,y)+d(y,z)\big)^{\eta-\alpha},
 \end{align*}
 then for all $|\beta|\le \eta$ it holds
 \begin{align*}
  |\nu_{\beta}|\lesssim_{\mathfrak{s},d,\eta,\alpha} (\|U\|_{G^\eta(D)}+C[U]_{G^{\eta,\alpha}(D)}) d(x,y)^{\eta-|\beta|}.
 \end{align*}
\end{lemma}
\begin{proof}
 Within the proof $\lesssim$ and $\simeq$ will always stand for an inequality (resp.\@ equality) up to a constant depending only on the scaling $\mathfrak{s}$, the dimension $d$, the regularities $\eta$ and $\alpha$, and the parameter $c$.
 Write $\mathsf{A}$ for the set of $\beta$'s with $|\beta|\le \eta$ and $|\mathsf{A}|$ for its cardinality.
 Let $x,y\in \R^d$ be given.
 For $\eps\in \N$ and $\rho=(\rho_1,\ldots,\rho_d)\in \N^d$ the assumption on $D$ implies that $z:=y+\sum_{j=1}^d (\eps\rho_j d(x,y))^{\mathfrak{s}_j} \mathbf{e}_j\in D$.
 In particular, $|(z-y)^\beta|= \eps^{|\beta|}\rho^{\mathfrak{s}\beta}d(x,y)^{|\beta|}$ for all $\beta\in\mathsf{A}$,\footnote{We use the notation $\rho^{\mathfrak{s}\beta}:=\prod_{j=1}^d \rho_j^{\mathfrak{s}_j\beta_j}$.} and moreover $d(z,x)\le d(x,y)+ d(y,z)\lesssim c_{\eps,\rho}d(x,y)$.
We hence have
 \begin{align*}
  \eps^{|\beta|}\rho^{\mathfrak{s}\beta}d(x,y)^{|\beta|}|\nu_{\beta}| & \simeq |\nu_{\beta}(z-y)^\beta| \\
  &\lesssim |(U_x-U_y-P)(z)|+|U_x(z)|+|U_y(z)|+\sum_{\gamma\in\mathsf{A}\setminus\{\beta\}} \eps^{|\gamma|}\rho^{\mathfrak{s}\gamma}d(x,y)^{|\gamma|}|\nu_{\gamma}|  \\
  &\lesssim \big(c_{\eps,\rho}(C[U]_{G^{\eta,\alpha}(D)} + \|U\|_{G^\eta(D)}) + \sum_{\gamma\in\mathsf{A}\setminus\{\beta\}} \eps^{|\gamma|}\rho^{\mathfrak{s}\gamma}d(x,y)^{|\gamma|-\eta} |\nu_{\gamma}| \big) d(x,y)^{\eta}.
 \end{align*}
 Thus it follows that for all $\eps\in\N$ and $\rho\in \N^d$ we have a constant $c_{\eps,\rho}>0$ such that
 \begin{align}
  \frac{|\nu_{\beta}|}{d(x,y)^{\eta-|\beta|}}\lesssim c_{\eps,\rho} (C[U]_{G^{\eta,\alpha}(\R^d)} \|U\|_{G^\eta(\R^d)}) + \sum_{\gamma\in\mathsf{A}\setminus\{\beta\}} \eps^{|\gamma|-|\beta|}\rho^{\mathfrak{s}(\gamma-\beta)} \frac{|\nu_{\gamma}|}{d(x,y)^{\eta-|\gamma|}}. \label{e50_hyb}
 \end{align}
 Here $\rho^{\mathfrak{s}(\gamma-\beta)}=\prod_{j=1}^d \rho_j^{\mathfrak{s}_j(\gamma_j-\beta_j)}$ is well defined regardless of the sign of $\gamma_j-\beta_j$, i.e.\@ some components of $\rho$ might appear with a positive power, while some appear with a negative power.
 If $|\mathsf{A}|=1$, the sum on the right is empty, and the result follows upon taking $\eps=1$ and $\rho=(1,\ldots,1)$;
 we thus may assume $|\mathsf{A}|>1$.

 \medskip
 
 We now make the following claim: for any $\delta>0$, there exists a collection $(\kappa_{\beta}, \eps_{|\beta|}, \rho_{\beta})_{\beta \in \mathsf{A}}$ with 
 $\kappa_\beta\in [1,\infty)$, $\eps_{|\beta|}\in \N$, and $\rho_\beta\in \N^d$ such that for all $\gamma\in\mathsf{A}$
 \begin{align}\label{js150_hyb}
  \sum_{\beta\in\mathsf{A}\setminus\{\gamma\}} \kappa_\beta \eps_{|\beta|}^{|\gamma|-|\beta|}\rho_\beta^{\mathfrak{s}(\gamma-\beta)} \le \delta \kappa_\gamma.
 \end{align}
We temporarily postpone the proof of the above claim, and use \eqref{js150_hyb} to complete the proof.
The idea is to sum over a weighted version of \eqref{e50_hyb} in order to buckle the estimate.
Namely, for each $\beta \in \mathsf{A}$ apply \eqref{e50_hyb} with $\eps,\rho$ played by $\eps_{|\beta|},\rho_{\beta}$ and multiply the result by $\kappa_{\beta}$. Summing over $\beta \in \mathsf{A}$ yields
 \begin{align*}
 \sum_{\beta\in\mathsf{A}} \kappa_\beta \frac{|\nu_{\beta}|}{d(x,y)^{\eta-|\beta|}} &\lesssim c_{\kappa,\eps,\rho}(C[U]_{G^{\eta,\alpha}(\R^d)} + \|U\|_{G^\eta(\R^d)}) + \sum_{\beta\in\mathsf{A}} \sum_{\gamma\in\mathsf{A}\setminus\{\beta\}} \kappa_\beta\eps_{|\beta|}^{|\gamma|-|\beta|}\rho_\beta^{\mathfrak{s}(\gamma-\beta)} \frac{|\nu_{\gamma}|}{d(x,y)^{\eta-|\gamma|}} \\
 &= c_{\kappa,\eps,\rho}(C[U]_{G^{\eta,\alpha}(\R^d)} + \|U\|_{G^\eta(\R^d)}) + \sum_{\gamma\in\mathsf{A}} \frac{|\nu_{\gamma}|}{d(x,y)^{\eta-|\gamma|}} \sum_{\beta\in\mathsf{A}\setminus\{\gamma\}} \kappa_\beta\eps_{|\beta|}^{|\gamma|-|\beta|}\rho_{\beta}^{\mathfrak{s}(\gamma-\beta)}  \\
 &\le c_{\kappa,\eps,\rho}(C[U]_{G^{\eta,\alpha}(\R^d)} + \|U\|_{G^\eta(\R^d)}) + \delta \sum_{\gamma\in\mathsf{A}} \kappa_\gamma \frac{|\nu_{\gamma}|}{d(x,y)^{\eta-|\gamma|}},
 \end{align*}
 where the constant $c_{\kappa,\eps,\rho}$ depends on all $\kappa_\beta$, $\eps_{|\beta|}$, and $\rho_\beta$ with $\beta$ running through $\mathsf{A}$.
 Relabelling $\gamma\mapsto \beta$ in the last summation and choosing $\delta\ll_{\mathfrak{s},d,\eta,\alpha} 1$, we obtain the assertion by absorbing.

 \medskip
 
 It remains to give the proof of the claim leading to \eqref{js150_hyb}.\footnote{On a first reading, we suggest the reader consider the case $d=1$, in which case the lexicographic ordering is unnecessary and the claim holds with $\rho_{\beta}=1$ for all $\beta$ and one can ignore its role completely in the argument below.}
 We argue that this can be achieved by the following procedure:
 let $\prec$ be the lexicographic ordering on $\mathsf{A}$ induced by $|\cdot|$, that is $\beta\prec\gamma$ if either $|\beta|<|\gamma|$ or  $|\beta|=|\gamma|$ and there is $\ell\in[d]$ with $\beta_j=\gamma_j$ for all $j\in[\ell-1]$ and $\beta_\ell< \gamma_\ell$.
 Such $\ell$ as in the last property we call the first differing component of $|\beta|=|\gamma|$.
 For $\beta=0\in \mathsf{A}$ set $\kappa_\beta:=\eps_{|\beta|}:=\rho_{1\beta}:=\ldots:=\rho_{d\beta}:=1$.
 Now suppose that $(\kappa_{\beta},\rho_{\beta})$ have been selected for all $\beta \in \mathsf{A}$ with $\beta\prec\gamma$ and $\eps_{|\beta|}$ has been chosen for all $|\beta|<|\gamma|$. 
 We now define $\kappa_{\gamma},\rho_{\gamma}$ and $\eps_{|\gamma|}$ as follows: first choose $\kappa_\gamma$ so large that $\kappa_\beta\eps_{|\beta|}^{|\gamma|-|\beta|}\rho_\beta^{\mathfrak{s}(\gamma-\beta)}\le \frac1{|\mathsf{A}|-1}\delta\kappa_\gamma$ for all $|\beta|<|\gamma|$ and $\kappa_\beta\rho_\beta^{\mathfrak{s}(\gamma-\beta)}\le \frac1{|\mathsf{A}|-1}\delta\kappa_\gamma$ for all remaining $\beta\prec\gamma$.
 If there is no $\beta\prec\gamma$ with $|\beta|=|\gamma|$, set $\rho_\gamma:=(1,\ldots,1)$.
 Otherwise let $j\in[d]$ be the largest number such that there exists $\beta\in\mathsf{A}$ with $\beta\prec\gamma$ and $|\gamma|=|\beta|$ having $j$ as the first differing component.
 Set $\rho_{(j+1)\gamma}:=\ldots:=\rho_{d\gamma}:=1$ and choose $\rho_{j\gamma}\in \N$ so large that $\kappa_{\gamma}\rho_{j\gamma}^{\mathfrak{s}_j(\beta_j-\gamma_j)}\le \frac1{|\mathsf{A}|-1}\delta\kappa_{\beta}$ for all $\beta\prec\gamma$ with $|\beta|=|\gamma|$ and first differing component being $j$.
 Then choose $\rho_{(j-1)\gamma}\in\N$ so large that $\kappa_{\gamma}\rho_{(j-1)\gamma}^{\mathfrak{s}_{j-1}(\beta_{j-1}-\gamma_{j-1})}\rho_{j\gamma}^{\mathfrak{s}_j(\beta_j-\gamma_j)}\le \frac1{|\mathsf{A}|-1}\delta\kappa_{\beta}$ for all $\beta\prec\gamma$ with $|\beta|=|\gamma|$ and first differing component being $j-1$.
 Proceed like this to define all $\rho_{\ell\gamma}\in\N$ for $\ell\in[d]$.
 Observe that by this construction we have achieved $\kappa_\gamma\rho_\gamma^{\mathfrak{s}(\beta-\gamma)}\le\frac1{|\mathsf{A}|-1}\delta\kappa_\beta$ for all $\beta\prec\gamma$ with $|\beta|=|\gamma|$.
 Having defined all $\kappa_\mu$ and $\rho_\mu$ with $|\mu|=|\gamma|$, define $\eps_{|\gamma|}\in\N$ so large that $\kappa_{\mu}\eps_{|\gamma|}^{|\beta|-|\gamma|}\rho_\mu^{\mathfrak{s}(\beta-\mu)}\le \frac1{|\mathsf{A}|-1}\delta\kappa_\beta$ for all $|\mu|=|\gamma|$ and $|\beta|<|\gamma|$.
 Since for any $\gamma\in\mathsf{A}$ the sum in \eqref{js150_hyb} contains at most $|\mathsf{A}|-1$ terms, this proves the claim.
\end{proof}

\subsection{Locally Uniform Norms}
In this section, we generalize Theorem \ref{thm:wholespace} a bit further, considering again the elliptic problem in the whole space $\R^{d}$ but restricting the active variable to a neighborhood of the base-point, and similarly restricting the magnitude of the convolution parameter.  The estimate is then shown to be uniform in the size of the neighborhood (and the convolution parameter), modulo an additional supremum norm of the germ which fades away as the size of the ball tends to infinity.  The argument is inspired in part by Simon's treatment of localized Schauder estimates in Theorem 2 of \cite{simon1997schauder}. 

\medskip

For a germ $U$ over $\R^{d}$ and $R>0$ we introduce the quantity 
\begin{equation}
\|U\|_{<R}:=
    \sup_{\substack{x,y\in\R^d\\ d(x,y)<R}}|U_{x}(y)|,
\end{equation}
and define the locally uniform semi-norms
\begin{equation}
[U]_{G_R^\gamma(\R^d)}:=
\begin{cases}
{\displaystyle \sup_{\substack{x,y\in\R^d \\0<d(x,y)<R}}  \frac{| U_{x}(y) |}{d(x,y)^{\gamma}}} &\quad \text{if} \quad \gamma>0 \\
{\displaystyle\sup_{\varphi\in \mathcal{B}}\sup_{\lambda<R}\sup_{x\in \R^d} \lambda^{-\gamma}\big |\langle U_{x},\varphi_{x}^{\lambda} \rangle \big |}   &\quad \text{if} \quad \gamma<0.
\end{cases}
\end{equation}
Given $\gamma>0$ and $\alpha \in (0,\gamma)$ we also define the quantity
\begin{equation}
[U]_{G_R^{\gamma,\alpha}(\R^d)}:= \sup_{\substack{x,y\in\R^d \\ 0<d(x,y)<R}}\inf_{P \in \mathcal{P}_{\lfloor \gamma \rfloor}} \sup_{\substack{z\in \R^d \\ 0<d(y,z)<R}} \frac{\big | (U_{x}-U_{y}-P)(z) \big | }{d(y,z)^{\alpha} \big ( d(x,y)+ d(y,z)\big )^{\gamma-\alpha}}\nonumber.
\end{equation}
For the rest of the section, we will suppress the domain $\R^d$ in the notation of the semi-norms.
We have the following variant of Lemma \ref{lemma:Rescaling}.
\begin{lemma}\label{lemma:rescaling2} 
Let $R>0$ and $w \in \R^{d}$.  Suppose $\mathcal{L}$ is of anisotropic order $m$, and let $\eta\in(0,m)$, $\alpha\in (0,\eta)$.
For all germs $U$ over $\R^{d}$ is holds
\begin{equation}
[S_{w}^R U]_{G_1^\eta }=R^\eta [U]_{G_R^\eta }, \quad \quad   [S_{w}^R U]_{G_1^{\eta,\alpha}} =R^\eta [U]_{G_R^{\eta,\alpha}} \quad \quad   [\mathcal{L} S_{w}^R U]_{G_1^{\eta-m}}=R^\eta[\mathcal{L}U]_{G_R^{\eta-m}}.  \label{G5_2}
\end{equation}
\end{lemma}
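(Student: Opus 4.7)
The plan is to adapt the proof of Lemma \ref{lemma:Rescaling} while carefully tracking how the localization constraints transform under the scaling map $S_w^R$. All the essential ingredients are already established in that lemma: the distance identity $d(S_w^R x, S_w^R y) = R\, d(x,y)$, the bijectivity of $S_w^R$ on $\R^d$, the induced bijection $P \mapsto P \circ (S_w^R)^{-1}$ on $\mathcal{P}_{\lfloor \eta\rfloor}$, and the commutator relation $\mathcal{L} S_w^R = R^m S_w^R \mathcal{L}$ used already in \eqref{e53}.

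For the first identity, I would start from the definition of $[S_w^R U]_{G_1^\eta}$, substitute $U_{S_w^R x}(S_w^R y)$ for $(S_w^R U)_x(y)$, and then relabel $x' := S_w^R x$, $y' := S_w^R y$. Since $S_w^R$ is a bijection on $\R^d$, the constraint $0 < d(x,y) < 1$ becomes $0 < d(x',y') < R$, while $d(x,y)^\eta = R^{-\eta} d(x',y')^\eta$ produces the prefactor $R^\eta$. For the second identity, the same substitution is applied simultaneously to $x$, $y$, and the auxiliary active variable $z$; the polynomial bijection $P \mapsto P \circ (S_w^R)^{-1}$ handles the infimum over $P \in \mathcal{P}_{\lfloor \eta\rfloor}$, and the two distance factors in the denominator contribute $R^\alpha \cdot R^{\eta-\alpha} = R^\eta$.

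For the third identity, I would use the intertwining $\mathcal{L} S_w^R = R^m S_w^R \mathcal{L}$ to reduce the claim to showing $[S_w^R V]_{G_1^\gamma} = R^\gamma [V]_{G_R^\gamma}$ with $V := \mathcal{L}U$ and $\gamma := \eta - m < 0$ (the factor $R^m$ combining with $R^\gamma$ to give the advertised $R^\eta$). The computation carried out in \eqref{e49} already yields
\begin{equation*}
\langle (S_w^R V)_x, \varphi_x^\lambda\rangle = \langle V_{S_w^R x}, \varphi_{S_w^R x}^{R\lambda}\rangle.
\end{equation*}
Relabeling $x' := S_w^R x$ and $\lambda' := R\lambda$ transforms the constraint $\lambda < 1$ into $\lambda' < R$, matching the definition of $[\,\cdot\,]_{G_R^\gamma}$, while $\lambda^{-\gamma} = R^\gamma (\lambda')^{-\gamma}$ produces the required $R^\gamma$.

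The only real obstacle is bookkeeping: one must verify that each localization constraint (on $d(x,y)$, on $d(y,z)$, and on $\lambda$) transforms consistently by a factor of $R$ under the scaling map. Beyond this, the argument invokes nothing not already contained in the proof of Lemma \ref{lemma:Rescaling}.
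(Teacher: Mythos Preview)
Your proposal is correct and takes essentially the same approach as the paper, which simply states that the lemma ``follows the same way as the proof of Lemma~\ref{lemma:Rescaling}.'' You have merely spelled out in detail what the paper leaves implicit: that the localization constraints $d(x,y)<1$, $d(y,z)<1$, and $\lambda<1$ transform to $d(x',y')<R$, $d(y',z')<R$, and $\lambda'<R$ under $S_w^R$, which is exactly the bookkeeping needed to pass from the global semi-norms of Lemma~\ref{lemma:Rescaling} to the locally uniform ones here.
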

\begin{proof}
Follows the same way as the proof of Lemma \ref{lemma:Rescaling}.
\end{proof}
 \begin{theorem}\label{thm:wholespace2}
Let $m\in\N$ and let $\mathcal{L}$ be an elliptic operator of anisotropic order $m$.
Suppose $0<\alpha<\eta<m$ and $\alpha,\eta \notin \N$.
There exists a constant $C$ such that for all $\rho>0$ and all germs $U$ on $\R^{d}$ with $[U]_{G_\rho^\eta(\R^d)}<\infty$, it holds
\begin{equation}
[U]_{G_\rho^\eta} \leq C \big (  [\mathcal{L} U]_{G_\rho^{\eta-m}}+[U]_{G_\rho^{\eta,\alpha}} + \rho^{-\eta}\|U\|_{<\rho}\big ). \label{G1_2}
\end{equation}
\end{theorem}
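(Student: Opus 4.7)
The plan is to adapt Simon's blow-up argument from the proof of Theorem \ref{thm:wholespace}, with one crucial new ingredient: the extra term $\rho^{-\eta}\|U\|_{<\rho}$ forces the concentration scale to shrink to zero, so that the localized domain, after rescaling, exhausts all of $\R^{d}$ in the limit.

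First I would use Lemma \ref{lemma:rescaling2} (applied with $w=0$ and $R=\rho$) to reduce to the case $\rho=1$. Suppose toward contradiction that \eqref{G1_2} fails for $\rho=1$: there exist germs $U^{n}$ over $\R^{d}$ with
\begin{equation*}
[U^{n}]_{G_{1}^{\eta}}=1, \qquad [\mathcal{L}U^{n}]_{G_{1}^{\eta-m}}+[U^{n}]_{G_{1}^{\eta,\alpha}}+\|U^{n}\|_{<1} \leq \tfrac{1}{n}.
\end{equation*}
Choose $x^{n},y^{n}\in\R^{d}$ with $R^{n}:=d(x^{n},y^{n})<1$ and $(R^{n})^{-\eta}|U^{n}_{x^{n}}(y^{n})| \geq \tfrac{1}{2}$. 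The key observation is that $|U^{n}_{x^{n}}(y^{n})|\leq \|U^{n}\|_{<1}\leq \tfrac{1}{n}$, so $(R^{n})^{\eta}\leq \tfrac{2}{n}$ and hence $R^{n}\to 0$. This is the mechanism by which the small-ball term closes the estimate.

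Next I would re-center and rescale by setting $u^{n}:=(R^{n})^{-\eta}\big(S^{R^{n}}_{x^{n}}U^{n}\big)_{0}$, which is defined on the domain $D^{n}:=(S^{R^{n}}_{x^{n}})^{-1}\R^{d}=\R^{d}$, but with the information $[\,\cdot\,]_{G_{1/R^{n}}^{\eta}}=1$, etc., by Lemma \ref{lemma:rescaling2} applied in the form $[S^{R^{n}}_{x^{n}}U^{n}]_{G_{1/R^{n}}^{\eta}}=(R^{n})^{\eta}[U^{n}]_{G_{1}^{\eta}}$. Since $R^{n}\to 0$, the ``radius of validity'' $1/R^{n}$ diverges. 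Then, for any fixed $R>0$ and all $n$ large enough that $R<1/R^{n}$, the localized analogues of Lemmas \ref{lemma:Holderbound_gen} and \ref{lemma:Holderbound3} (their proofs only use increments within the ball in which the semi-norms are controlled, hence transfer verbatim to the $G_{1/R^{n}}$-setting) yield
\begin{equation*}
[u^{n}]_{C^{\alpha}(B_{R})}\lesssim R^{\eta-\alpha}, \quad \|\mathcal{L}u^{n}\|_{C^{-m}(B_{R})}\lesssim \tfrac{1}{n}R^{\eta}, \quad \sup_{0<d(0,y)<1/R^{n}}d(0,y)^{-\eta}|u^{n}(y)|\leq 1.
\end{equation*}

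By Arzel\`a-Ascoli, a subsequence of $u^{n}$ converges locally uniformly to a continuous $u:\R^{d}\to\C$ with the sub-polynomial growth $|u(y)|\leq d(0,y)^{\eta}$. Testing $\mathcal{L}u^{n}$ against arbitrary $\varphi\in C^{\infty}_{c}(\R^{d})$ (choosing $R$ large enough to cover the support) shows $\langle \mathcal{L}u,\varphi\rangle=0$, so $u$ is $\mathcal{L}$-harmonic in $\mathcal{D}'(\R^{d})$, hence smooth by ellipticity (Theorem 4.1.4 of \cite{Hor83}). By the growth bound, $u$ is a tempered distribution, so Lemma \ref{js100} gives that $u$ is a polynomial; the bound $|u(y)|\leq d(0,y)^{\eta}$ with $\eta\notin\N$ then forces $u\equiv 0$. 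On the other hand, $z^{n}:=(S^{R^{n}}_{x^{n}})^{-1}y^{n}$ satisfies $d(z^{n},0)=1$ and $|u^{n}(z^{n})|\geq \tfrac{1}{2}$, so along a subsequence $z^{n}\to z$ with $d(z,0)=1$ and $|u(z)|\geq \tfrac{1}{2}$, a contradiction.

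I expect the main (minor) obstacle to be verifying that the proofs of Lemmas \ref{lemma:Holderbound_gen}, \ref{lemma:polynomialbound}, and \ref{lemma:Holderbound3} adapt to the truncated semi-norms $G^{\eta}_{R}$, $G^{\eta,\alpha}_{R}$, and $G^{\eta-m}_{R}$. This is essentially bookkeeping: each polynomial identity and each test function $\varphi^{\lambda}_{y}$ used in those proofs only probes points $z$ within a distance $\mathcal{O}(d(x,y))$ of $y$, so restricting all relevant distances to lie below the truncation threshold does not affect the arguments; one only needs the rescaled threshold $1/R^{n}$ to be eventually larger than any fixed $R$, which is exactly what $R^{n}\to 0$ provides.
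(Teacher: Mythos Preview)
Your argument is correct, and it takes a genuinely more direct route than the paper's.

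The paper proceeds in two steps: first it proves by blow-up an auxiliary estimate of the form
\[
[U]_{G_\delta^\eta}\le \tfrac12[U]_{G_1^\eta}+C\big([\mathcal{L}U]_{G_1^{\eta-m}}+[U]_{G_1^{\eta,\alpha}}\big)
\]
for some small $\delta\in(0,1)$, and only then brings in $\|U\|_{<1}$ through a splitting into $d(x,y)<\delta$ versus $\delta\le d(x,y)<1$, absorbing $\tfrac12[U]_{G_1^\eta}$ at the end. In that scheme the exhaustion of $\R^d$ after rescaling comes from the parameter $n$ in the negated statement (the effective radius is $n/R_n\ge n$), not from any smallness of the concentration scale.

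You instead assume directly that \eqref{G1_2} fails and exploit the hypothesis $\|U^n\|_{<1}\le 1/n$ to force $(R^n)^\eta\le 2/n$, hence $R^n\to 0$; this is what makes the effective radius $1/R^n$ diverge and allows the blown-up picture to fill out $\R^d$. This is a clean observation that collapses the paper's two-step structure into one. The price is exactly the bookkeeping you identify: Lemmas \ref{lemma:Holderbound3}, \ref{lemma:Holderbound_gen}, and \ref{lemma:polynomialbound} must be re-stated with the truncated semi-norms $G^{\eta}_{\rho}$, $G^{\eta,\alpha}_{\rho}$, $G^{\eta-m}_{\rho}$, and one should note that the auxiliary points used there (e.g.\ the probing points $z$ in Lemma \ref{lemma:polynomialbound}) lie within a fixed multiple of $d(x,y)$ of the base-point, so a threshold of the form $c_d R<1/R^n$ suffices. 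The paper in fact also invokes such ``straightforward adaptations'' without spelling them out, so your level of detail matches.
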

\begin{proof}
By a scaling argument and Lemma \ref{lemma:rescaling2}, we may assume that $\rho=1$.
The proof is based on two general steps: we start with an argument based on a similar skeleton as for the proof of Theorem \ref{thm:continuum}, but with a slightly modified statement.  Namely, we argue that there are $C>0$, $\delta \in (0,1)$ such that 
\begin{align}\label{js010}
    [U]_{G_{\delta}^\eta}\le \frac12 [U]_{G_1^\eta} + C([\mathcal{L}U]_{G_1^{\eta-m}}+[U]_{G_1^{\eta,\alpha}}).
\end{align}
In a second step, we quickly deduce the main estimate \eqref{G1_2} from \eqref{js010}.  We now give the argument for \eqref{js010}.  If the claim were false, then for all $C,\delta>0$ there would exist a germ $U_{C,\delta}$ such that
\begin{align*}
    [U_{C,\delta}]_{G_{\delta}^\eta}>\frac12 [U_{C,\delta}]_{G_{1}^\eta} + C([\mathcal{L}U_{C,\delta}]_{G_{1}^{\eta-m}} + [U_{C,\delta}]_{G_{1}^{\eta,\alpha}}).
\end{align*}
In particular, choosing $C:=n$ and $\delta:=\frac1n$, we have for all $n\in \N$ a germ $\hat U^n$ such that
\begin{align*}
    [\hat U^{n}]_{G_{1/n}^\eta}>\frac12 [\hat U^{n}]_{G_{1}^\eta} + n([\mathcal{L}\hat U^{n}]_{G_{1}^{\eta-m}} + [\hat U^{n}]_{G_{1}^{\eta,\alpha}}).
\end{align*}
Introducing the germs $\tilde U^n:=n^{\eta}S_0^{1/n}\hat U^n$, we obtain from Lemma \ref{lemma:rescaling2}
\begin{align*}
    [\tilde U^{n}]_{G_1^\eta}>\frac12 [\tilde U^{n}]_{G_n^\eta} + n([\mathcal{L}\tilde U^{n}]_{G_n^{\eta-m}} + [\tilde U^{n}]_{G_n^{\eta,\alpha}}).
\end{align*}
Normalizing, we obtain
a sequence of germs $\tilde{U}^{n}$ with
\begin{align*}
[\tilde{U}^{n}]_{G_1^\eta}=1, \quad \frac12 [\tilde{U}^{n}]_{G_n^\eta} + n([\mathcal{L} \tilde{U}^{n}]_{G_n^{\eta-m}}+[\tilde{U}^{n}]_{G_n^{\eta,\alpha}}) \leq 1,  
\end{align*}
and as in the proof of Theorem \ref{thm:continuum} we find sequences $x_{n},y_{n}$ with $R_{n}:=d(x_{n},y_{n})\le 1$ such that
\begin{equation}
\big |\tilde{U}^{n}_{x_{n}}(y_{n})\big | \geq \frac{1}{2}R_{n}^{\eta}\label{G4_2}.
\end{equation}
We `blow up' near $x_{n}$ by defining a new sequence of germs $U^{n}$ via
\begin{equation}
U^{n}_{x}:=R_{n}^{-\eta}\tilde{U}_{S_{x_{n}}^{R_{n}}(x)} \circ S_{x_{n}}^{R_{n}}.\nonumber
\end{equation}
It follows by \eqref{G5_2} of Lemma \ref{lemma:rescaling2} that
\begin{equation}
[U^{n}]_{G_{1/R_n}^\eta}=1, \quad \frac12 [U^{n}]_{G_{n/R_n}^\eta} + n([\mathcal{L} U^{n}]_{G_{n/R_n}^{\eta-m}}+[U^{n}]_{G_{n/R_n}^{\eta,\alpha}}) \le 1  \label{js003_2}.
\end{equation}
Furthermore, defining $z_{n}:=(S_{x_{n}}^{R_n})^{-1}(y_n)$ (these are points on the anisotropic unit sphere), the lower bound \eqref{G4_2} turns into
\begin{equation}
\big |U^{n}_{0}(z_{n})\big | \geq \frac{1}{2} \label{G11_2}.
\end{equation}
We now define a sequence of continuous functions $u^{n}: \R^{d} \mapsto \R$ by
\begin{equation}
u^{n}(y):=U^{n}_{0}(y)\label{G12_2}.
\end{equation}

\medskip

We argue that $u^{n}$ is \textit{locally uniformly bounded} and \textit{equicontinuous}.  Indeed, by \eqref{js003_2}, there holds for all $n\in\N$, $R\in (0,n/R_n)$ and $y\in B_R(0)$
\begin{align*}
    |u^n(y)|&=|U_0^n(y)|\le [U^n]_{G_{n/R_n}^\eta} R^\eta\le 2 R^\eta,
\end{align*}
that is
\begin{align}
 \| u^{n}\|_{B_R} \lesssim R^\eta.  \label{E1_2}
\end{align}
Since $n/R_n\to \infty$ due to $R_n\in (0,1]$, this shows that $u^n$ is locally uniformly bounded.
We may also estimate the local $C^{\alpha}$ norm of $u^{n}$ by a straight forward adaptation of Lemma \ref{lemma:Holderbound_gen_hyb}, yielding for $n\in\N$ and $R\in (0,n/R_n)$ that
\begin{align*}
    \sup_{x\in\R^d} [U_x^n]_{C^\alpha(B_R(x))}\le ([U^n]_{G_{n/R_n}^\eta}+[U]_{G_{n/R_n}^{\eta,\alpha}})R^{\eta-\alpha}.
\end{align*}
Hence, $u^n$ is locally equicontinuous.
By Arzel\`a-Ascoli's theorem, passing to a subsequence (which we omit), the sequence $u^{n}$ converges locally uniformly to a continuous, locally bounded limit $u$.
We next argue that $u$ is $\mathcal{L}$-harmonic by a modification of the argument for Lemma \ref{lemma:Holderbound3}. 
Indeed, note that for $\lambda\in (0,1)$
\begin{equation}
\big |\langle \mathcal{L} u^{n}, \varphi_{y}^{\lambda} \rangle \big |
=\big |\langle \mathcal{L} U^{n}_{0}, \varphi_{y}^{\lambda} \rangle \big |\leq \big |\langle \mathcal{L} U^{n}_{y}, \varphi_{y}^{\lambda} \rangle \big |+|\langle \mathcal{L} (U^{n}_{0}-U^{n}_{y}) , \varphi_{y}^{\lambda} \rangle \big | \nonumber.
\end{equation}
Observe that for each fixed $y \in \R^{d}$ and $\lambda\in (0,1)$,  both terms on the RHS tend to zero as $n \to \infty$ by \eqref{js003_2}.
Indeed, for the first term, as long as $\lambda <\frac{n}{R_{n}}$, which is true for $n$ large enough, the quantity is bounded by $\frac{1}{n}\lambda^{\eta-m}$ using the definition combined with the second point in \eqref{js003_2}, and $n/R_n\to \infty$.
For the second term, let $n_0\in\N$ be so large that $y\in B_{n/R_n}(0)$ and $\supp\varphi_y^\lambda \subseteq y + B_{n/R_n}(y)$ for all $n\ge n_0$. Then we have
\begin{align}
&|\langle \mathcal{L} (U^{n}_{0}-U^{n}_{y} ) , \varphi_{y}^{\lambda} \rangle \big | \nonumber \\
&=|\langle \mathcal{L} (U^{n}_{0}-U^{n}_{y} - P^n ) , \varphi_{y}^{\lambda} \rangle \big | \nonumber \\
&\le\int \big |(U^{n}_{0}-U^{n}_{y} - P^n)(z)\big | |\mathcal{L}^* \varphi_{y}^{\lambda}(z)|dz  \nonumber \\
&\le [U^{n}]_{G_{n/R_n}^{\eta,\alpha}} \int d(y,z)^{\alpha} \big ( d(y,z)+d(0,y) \big )^{\eta-\alpha}|\mathcal{L}^* \varphi_{y}^{\lambda}(z)|dz  \stackrel{n\to\infty}{\longrightarrow} 0 \nonumber.
\end{align} 
In summary, we find for each $y\in \R^d$ and $\lambda\in(0,1)$
\begin{align*}
    \langle \mathcal{L} u, \varphi_{y}^{\lambda} \rangle = \langle u,\mathcal{L}^*\varphi_y^\lambda\rangle=\lim_{n\to\infty}\langle u^n,\mathcal{L}^*\varphi_y^\lambda\rangle = \lim_{n\to\infty}\langle \mathcal{L} u^n,\varphi_y^\lambda\rangle=0.
\end{align*}
 Letting $\lambda \to 0$, this implies that $u$ is $\mathcal{L}$-harmonic as a distribution (see Theorem 4.1.4 in \cite{Hor83}) and thus by ellipticity a smooth function.  As in the proof of Theorem \ref{thm:wholespace} this yields a contradiction.  This proves \eqref{js010}.

\medskip

We now turn to the second step in the proof and claim that \eqref{js010} implies that there is $C>0$ such that we have
 \begin{align}\label{js011}
     [U]_{G_1^\eta}\le \frac12 [U]_{G_1^\eta} + C([\mathcal{L}U]_{G_1^{\eta-m}}+[U]_{G_1^{\eta,\alpha}}+ \|U\|_{<1}),
 \end{align}
 which in turn immediately implies \eqref{G1_2} by absorption (and the fact that we may restrict to $\rho=1$).
 Indeed, for all $x,y\in \R^d$ with $0<d(x,y)<\delta$ we have
 \begin{align*}
     \frac{|U_x(y)|}{d(x,y)^\eta}&\le [U]_{G_{\delta}^\eta}\le \frac12 [U]_{G_1^\eta} + C([\mathcal{L}U]_{G_1^{\eta-m}}+[U]_{G_1^{\eta,\alpha}}),
 \end{align*}
 while for $x,y\in \R^d$ with $\delta\le d(x,y)< 1$ we have
 \begin{align*}
     \frac{|U_x(y)|}{d(x,y)^\eta}\le \delta^{-\eta}\|U\|_{<1}.
 \end{align*}
 Combining these two estimates gives \eqref{js011}.
\end{proof}

\subsection{Discrete Elliptic Operators}\label{sec:discrete_el}
We recall the definition of $|\gamma|$ in \eqref{e51} given a scaling $\mathfrak{s}\in \N^d$.
In this section, we derive a priori estimates for discrete elliptic operators on $\Lambda_\epsilon:=\epsilon^{\mathfrak{s}_1}\Z\times\cdots\times \epsilon^{\mathfrak{s}_d}\Z$, $\epsilon>0$,\footnote{We advise the reader to set $\epsilon=1$ at a first reading.} of the form
\begin{align*}
     \mathcal{L}_\epsilon=\sum_{|\gamma|+|\delta|= m}a_{\gamma,\delta}D_\epsilon^\gamma\overline{D}{}_\epsilon^\delta,
\end{align*}
where for $\gamma\in\N_0^d$ we write $D_\epsilon^\gamma u=D_{\epsilon,1}^{\gamma_1}\cdots D_{\epsilon,d}^{\gamma_d} u$ with the forward and backward difference operators
\begin{align*}
    D_{\epsilon,j}:=\epsilon^{-\mathfrak{s}_j}(\tau_{\epsilon,j}-\id), \quad \text{resp.} \quad \overline{D}_{\epsilon,j}:=\epsilon^{-\mathfrak{s}_j}(\id-\tau_{\epsilon,j}^{-1}),
\end{align*}
and the translation operators $\tau_{\epsilon,j}$ defined by $\tau_{\epsilon,j}u(k):=u(k+\epsilon^{\mathfrak{s}_j} \mathbf{e}_j)$.
Observe that for $u\in C^\infty(\R^d)$ it holds $\lim_{\epsilon\searrow 0} D_{\eps,j}u=\lim_{\epsilon\searrow 0} \overline{D}_{\eps,j}u=\partial_j u$, and hence $\lim_{\epsilon\searrow 0} \mathcal{L}_\epsilon u=\mathcal{L} u$ with $\mathcal{L}:=\sum_{|\gamma|+|\delta|=m} a_{\gamma,\delta} \partial^{\gamma+\delta}$.
Let us write $\widehat{\Lambda}_\epsilon:=\mathbb{T}_{\epsilon^{-\mathfrak{s}_1}}\times \cdots \times \mathbb{T}_{\epsilon^{-\mathfrak{s}_d}}$ with $\mathbb{T}_r:=\R/2\pi r\Z$  for the Pontryagin dual group of $\Lambda_\epsilon$, see Chapter 1 in \cite{Rud62}.\footnote{We often identify $\mathbb{T}_r$ with the interval $[-\pi r,\pi r)$ if no confusion can arise.}
Moreover, we write $\xi_{\theta,j}^\epsilon:=i\epsilon^{-\mathfrak{s}_j}(1-e^{i\epsilon^{\mathfrak{s}_j}\theta_j})$ and use for $\gamma\in\N_0^d$ the notation $(\xi_\theta^\epsilon)^\gamma=\prod_{j=1}^d (\xi_{z,j}^\epsilon)^{\gamma_j}$.
Following \cite{ThW68}, we call $\mathcal{L}_\epsilon$ discretely elliptic on $\Lambda_\epsilon$ if
\begin{align}\label{js1011}
 \hat{\mathcal{L}_\epsilon}(\theta):=\sum_{|\gamma|+|\delta|= m}a_{\gamma,\delta} (i\xi_\theta^\epsilon)^\gamma (i\overline{\xi_\theta^\epsilon})^\delta\ne 0 \  \text{ if } \ \theta\in\widehat{\Lambda}_\epsilon\setminus\{\mathsf{0}\}   
\end{align}
and 
\begin{align*}
 \hat{\mathcal{L}}(\xi):=\sum_{|\gamma|+|\delta|= m}a_{\gamma,\delta} (i\xi)^{\gamma+\delta} \ne 0 \ \text{ for all } \ \xi\in\mathbb{R}^d\setminus\{0\}.   
\end{align*}
We note that \eqref{js1011} is natural in view of the Liouville theorem, see Lemma \ref{js1000} in the appendix.
Let us also mention that we may view $\xi\in \R^d$ as an element of $\widehat{\Lambda}_\eps$ for sufficiently small $\eps>0$, and it holds
\begin{align*}
\lim_{\epsilon\searrow 0} i\xi_{\xi,j}^\epsilon = \lim_{\epsilon\searrow 0} i\overline{\xi_{\xi,j}^\epsilon} = i\xi_j.
\end{align*}
In particular $\hat{\mathcal{L}}(\xi)=\lim_{\epsilon\searrow 0}\hat{\mathcal{L}_\epsilon}(\xi)$ for all $\xi\in\R^d$.
\begin{example}\label{js1010}
Let $\mathfrak{s}:=(1,\ldots,1)$ and $\epsilon>0$.
\begin{enumerate}
    \item\label{js1010i} Write $|\xi_\theta^\epsilon|^2:=\sum_{j=1}^d |\xi_{\theta,j}^\epsilon|^2$ for $\theta\in \widehat{\Lambda}_\epsilon$.\footnote{Observe that $|\xi_\theta^\epsilon|^2=\epsilon^{-2}\sum_{j=1}^d |e^{i\epsilon\theta_j}-1|^2 = 4\epsilon^{-2}\sum_{j=1}^d \sin^2\frac{\epsilon\theta_j}{2}$ for $\theta\in \widehat{\Lambda}_\epsilon$, i.e.~$\theta_j\in [-\frac{\pi}{\epsilon},\frac{\pi}{\epsilon})$ for all $j\in[d]$.}
    Then for the discrete Laplacian $\mathcal{L}_\epsilon=\Delta_\epsilon= \sum_{j=1}^d D_{\epsilon,j}\overline{D}_{\epsilon,j}$ we have $\hat{\mathcal{L}_\epsilon}(\theta)=-|\xi_\theta^\epsilon|^2$ and $\hat{\mathcal{L}}(\xi)=-|\xi|^2$.
    Since $|\xi_\theta^\epsilon|=0$ if and only if $\theta_j=0$ for all $j\in[d]$, we see that $\Delta_\epsilon$ is discretely elliptic on $\Lambda_\epsilon$.
    \item\label{js1010ii} Consider $\mathcal{L}_\epsilon=\sum_{j=1}^d (D_{\epsilon,j}-\overline{D}_{\epsilon,j})$.
    Observe that $\mathcal{L}_\epsilon=\epsilon\Delta_\epsilon$.
    Thus $\hat{\mathcal{L}}_\epsilon(\theta)=-\epsilon|\xi_\theta^\epsilon|^2\ne 0$
    for all $\theta\in\widehat{\Lambda}_\epsilon\setminus\{0\}$, but $\mathcal{L}(\xi)=\sum_{j=1}^d (i\xi_j-i\xi_j)=0$ for all $\xi\in \R^d$.
    Therefore $\mathcal{L}_\eps$ is not discretely elliptic on $\Lambda_\epsilon$.
\end{enumerate}
\end{example}
The preceding example shows that if not explicitly assumed, ellipticity may fail in the limit $\epsilon=0$.
In contrast, the discrete property \eqref{js1011} is independent of $\epsilon>0$ for scaling homogeneous operators as shown in the next lemma. 
\begin{lemma}\label{js1012}
 Let $m\in\N$ and let $(a_{\gamma,\delta})_{\gamma,\delta}$ be a family of complex numbers.
 If \eqref{js1011} holds for some $\epsilon_0>0$, then \eqref{js1011} holds for all $\epsilon>0$.
\end{lemma}
\begin{proof}
Let $\epsilon_0>0$ be such that \eqref{js1011} is fulfilled, and let $\epsilon>0$.
Let $\theta\in \widehat{\Lambda}_\epsilon\setminus\{0\}$ and observe that $\phi:=\big(\frac{\epsilon}{\epsilon_0}\big)^{\mathfrak{s}}\theta\in \widehat{\Lambda}_{\epsilon_0}\setminus\{0\}$, where $R^\mathfrak{s}\theta:=(R^{\mathfrak{s}_1}\theta_1,\ldots,R^{\mathfrak{s}_d}\theta_d)$ for $R>0$.
The key point is to observe that for $\gamma,\delta\in\N_0^d$ with $|\gamma|+|\delta|=m$ we have $\prod_{j=1}^d \epsilon_0^{-\mathfrak{s}_j\gamma_j}\prod_{j=1}^d \epsilon_0^{-\mathfrak{s}_j\delta_j}=\epsilon_0^{-\mathfrak{s}\cdot (\gamma+\delta)}=\epsilon_0^{-m}$ by \eqref{e51}.
Therefore the definition of $\xi_{\theta,j}^{\epsilon_0}$ gives the desired
\begin{align*}
0\ne \hat{\mathcal{L}}_{\epsilon_0}(\phi)&=\epsilon_0^{-m}\sum_{|\gamma|+|\delta|=m}a_{\gamma,\delta}\prod_{j=1}^d (e^{i\epsilon_0^{\mathfrak{s}_j}\phi_j}-1)^{\gamma_j}\prod_{j=1}^d (1-e^{-i\epsilon_0^{\mathfrak{s}_j}\phi_j})^{\delta_j}=(\epsilon_0/\epsilon)^{-m} \hat{\mathcal{L}}_\epsilon(\theta).
\qedhere
\end{align*}
\end{proof}
The analogue of the negative $G^{\gamma}$-norm in \eqref{e24} is now given for a germ $V$ over $\Lambda_\epsilon$ via
\begin{align*}
[V]_{G^{\gamma}(\Lambda_\epsilon)}:=\sup_{\varphi \in \mathcal{B}}\sup_{x \in \Lambda_\epsilon,\lambda \geq \epsilon} \lambda^{-\gamma}\big |\langle V_{x},\varphi_{x}^{\lambda} \rangle_{\epsilon} \big |,
\end{align*}
where $\langle \cdot, \cdot \rangle_{\epsilon}$ is given by
\begin{align*}
    \langle f,g\rangle_{\epsilon}:=\epsilon^{\sum_{j=1}^d \mathfrak{s}_j}\sum_{x \in \Lambda_\epsilon}f(x) g(x).
\end{align*}
For $\eta>0$ we say that a germ $U$ on $\Lambda_\epsilon$ is centered to order $\eta$ provided that $D_\epsilon^{\gamma}U_{x}(x)=0$ for all $x\in \Lambda_\epsilon$ and $|\gamma|\le \eta$.
\begin{lemma}\label{js1001}
    Let $\eta,\epsilon>0$. If $u:\Lambda_\epsilon\to \C$ is the restriction of a polynomial to $\Lambda_\epsilon$ of anisotropic order at most $\eta$ such that $D_\epsilon^\gamma u(0)=0$ for all $|\gamma|\le \eta$, then $u=0$.
\end{lemma}
\begin{proof}
    We may write $u(k)=\sum_{|\gamma|\le \eta} c_\gamma k_\epsilon^{(\gamma)}$,
    where the discrete monomials are defined via
    \begin{align*}
     k_\epsilon^{(\gamma)}:=\prod_{j=1}^d k_{\epsilon,j}^{(\gamma_j)}, \quad \quad \text{and} \quad  k_{\epsilon,j}^{(\gamma_j)}:=\prod_{m=0}^{\gamma_j-1} (k_j-\epsilon^{\mathfrak{s}_j}m),
    \end{align*}
    (cf.~Chapter 3.3.2 in \cite{RuT10}).
    By induction it follows $D_\epsilon^\gamma k_\epsilon^{(\delta)}=\frac{\delta!}{(\delta-\gamma)!} k_\epsilon^{(\delta-\gamma)}$ if $\gamma\le \delta$ and $D_\epsilon^\gamma k_\epsilon^{(\delta)}=0$ else.
    Note that for $\gamma\ne \delta$ the expression $\frac{\delta!}{(\delta-\gamma)!} k_\epsilon^{(\delta-\gamma)}$ vanishes in the origin $k=0$.
    In particular $0=D_\epsilon^\gamma u(0)= \gamma^{(\gamma)} c_\gamma$ for $|\gamma|\le \eta$, i.e.~$u=0$.
\end{proof}
The Schauder estimate for germs is now stated as follows.
\begin{theorem}\label{thm:discrete_gen}
Let $m\in\N$ and let $(a_{\gamma,\delta})_{|\gamma|+|\delta|=m}$ be a family of complex numbers such that for all $\epsilon>0$ the discrete difference operator $\mathcal{L}_\epsilon:=\sum_{|\gamma|+|\delta|= m}a_{\gamma,\delta}D_\epsilon^\gamma\overline{D}{}_\epsilon^\delta$ is elliptic on $\Lambda_\epsilon$.\footnote{By Lemma \ref{js1012}, it suffices to check this for \emph{one} $\epsilon>0$.}
 Suppose $0<\alpha<\eta<m$ with $\alpha,\eta\notin \N$.
 There exists a constant $C>0$ such that for all $\epsilon>0$ and germs $U$ centered to order $\eta$ on $\Lambda_\epsilon$ with $\|U\|_{G^{\eta}(\Lambda_\epsilon)}<\infty$ it holds 
\begin{equation}
    \|U\|_{G^{\eta}(\Lambda_\epsilon)} \leq C \big (  [\mathcal{L}_\epsilon U]_{G^{\eta-m}(\Lambda_\epsilon)}+[U]_{G^{\eta,\alpha}(\Lambda_\epsilon)} \big ) \label{js400}.
\end{equation}
\end{theorem}
\begin{proof}
The proof is similar to that of Theorem \ref{thm:discrete}, with the following modifications.
Replace $\Delta_\epsilon$ by $\mathcal{L}_\epsilon$ as well as $\Delta$ by $\mathcal{L}$, and write $m$ instead of $2$ for the anisotropic order of this operator.
As in the proof of Theorem \ref{thm:wholespace} we replace Lemma \ref{lemma:Holderbound2} by the more general statement in Lemma \ref{lemma:Holderbound_gen_hyb} to obtain a uniform H\"older bound.
In the case $\epsilon^\infty=0$ the assertion that $\langle \Delta_{\epsilon^{n}}\hat{u}^{n},\varphi_{y^{n}}^{\lambda^{n}} \rangle_{\epsilon^{n}}$ converges to $\langle \Delta u, \varphi_{y}^{\lambda} \rangle$ now reads that $\langle \mathcal{L}_{\epsilon^{n}}\hat{u}^{n},\varphi_{y^{n}}^{\lambda^{n}} \rangle_{\epsilon^{n}}$ converges to $\langle \mathcal{L} u, \varphi_{y}^{\lambda} \rangle$, which again follows by dominated convergence and the fact that both the forward and the backward difference operator approximate the continuous derivative.
Observe that $\hat{\mathcal{L}}(\xi)\ne 0$ for all $\xi\in \R^d\setminus\{0\}$ by assumption.
Therefore we may replace the reference to the classical continuous Liouville theorem by Lemma \ref{js100} in the appendix and conclude in the case $\epsilon^\infty=0$.

\medskip

In the case $\epsilon^\infty>0$, Lemma \ref{js1000} in the appendix is applicable with $\epsilon^\infty>0$ by assumption.
Together with the definition of a germ centered to order $\eta$ this shows that $u$ is identically zero.
\end{proof}

\section{Acknowledgments}
The authors thank the referees for carefully reading the manuscript and providing valuable feedback. S.S. is grateful for the financial support from National Key R\&D Program of China (No. 2022YFA1006300).

\appendix

\section{Liouville-type Theorems}
The proofs of our main theorems rely on generalized versions of the Liouville theorem.
For completeness, we state and prove them here both in the continuous and discrete setting.
The idea for the proofs are inspired by \cite[Theorem 1]{Wec83}.
We start with the continuous setting.
Let us write $\hat u:=\mathcal{F}u$ for the Fourier transform of a tempered distribution $u\in\mathcal{S}'(\R^d)$, and $\check{u}:=\mathcal{F}^{-1} u$ for its inverse Fourier transform.
The Fourier transform is an isomorphism on $\mathcal{S}'(\R^d)$.
If $a\in C^\infty(\R^d)$ is such that all derivatives are polynomially bounded, we write $a\in \mathcal{O}_M (\R^d)$.
For $T\in \mathcal{S}'(\R^d)$ and $a\in \mathcal{O}_M (\R^d)$ we define the multiplication $aT\in \mathcal{S}'(\R^d)$ via $\langle aT,\varphi\rangle:=\langle T,a\varphi\rangle$ for all $\varphi\in \mathcal{S}(\R^d)$, see Theorem 7.10 in \cite{Sch66}.
\begin{lemma}\label{js100}
 Let $\hat{\mathcal{L}}\in \mathcal{O}_M (\R^d)$.
 For $u\in \mathcal{S}'(\R^d)$ define $\mathcal{L}u:=\mathcal{F}^{-1}[\hat{\mathcal{L}}\hat{u}]\in \mathcal{S}'(\R^d)$.
 \begin{enumerate}
     \item\label{js100i} $\hat{\mathcal{L}}(\xi)
     \ne 0$ for all $\xi\in\R^d\setminus\{0\}$.
     \item\label{js100ii} If $u\in \mathcal{S}'(\R^d)$ satisfies $\mathcal{L}u=0$, then $u$ is a polynomial.
     \item\label{js100iii} For all $\eta\ge 0$ and $u\in L^{1}_{\mathrm{loc}}(\R^d)$ with $\mathcal{L}u=0$ and $\|u\|_{B_R(0)}\le CR^\eta$ for some $C\ge 0$ and all $R\ge 1$, it holds that $u$ is polynomial of anisotropic order at most $\eta$.
     \item\label{js100iv} $u(x):=e^{ix\cdot\xi}$ satisfies $\mathcal{L}u=0$ only if $\xi=0$.
 \end{enumerate}
 Moreover $\hat{\mathcal{L}}(\xi)\ne 0$ for all $\xi\in \R^d$ if and only if $u=0$ whenever $u\in \mathcal{S}'(\R^d)$ satisfies $\mathcal{L}u=0$.
\end{lemma}
\begin{proof}
 \ref{js100i}$\Rightarrow$\ref{js100ii}:
 Let $\varphi\in C^\infty_c(\R^d\setminus\{0\})$.
 Then $\psi:=\hat{\mathcal{L}}^{-1}\varphi\in \mathcal{S}(\R^d)$, and thus $\langle\hat u,\varphi\rangle=\langle\hat u,\hat{\mathcal{L}}\psi\rangle=\langle\hat{\mathcal{L}}\hat{u},\psi\rangle=\langle \mathcal{L}u,\hat\psi\rangle=0$.
 This shows $\supp\hat{u}\subset\{0\}$, which by Theorem 6.25 in \cite{Rud91} means that $\hat u$ is a finite linear combination of derivatives of the $\delta$-distribution, i.e., $u$ is a polynomial.
 If $\hat{\mathcal{L}}(\xi)\ne 0$ for all $\xi\in \R^d$, then the same argument with $\varphi\in C^\infty_c(\R^d)$ shows that $u=0$.

 \medskip

 \ref{js100ii}$\Rightarrow$\ref{js100iii}: Any polynomially bounded $u\in L^1_{\mathrm{loc}}(\R^d)$ is a tempered distribution, and the anisotropic order of a polynomial with $\|u\|_{B_R(0)}\le CR^\eta$ for all $R\ge 1$ is at most $\eta$.

 \medskip

 \ref{js100iii}$\Rightarrow$\ref{js100iv}: Follows by taking $\eta=0$, since $u(x)=e^{i x\cdot \xi}$ is non-constant and bounded for $\xi\in\R^d\setminus\{0\}$.

 \medskip
 
 \ref{js100iv}$\Rightarrow$\ref{js100i}: If $\xi\in \R^d$ fulfills $\hat{\mathcal{L}}(\xi)=0$, then $u(x):=e^{i x\cdot \xi}$ solves $\mathcal{L}u=0$.
 Indeed, since $\hat{u}=\delta_{\xi}$, we have $\langle\mathcal{L}u,\varphi\rangle=\langle\hat{\mathcal{L}}\hat{u},\check{\varphi}\rangle=\langle\hat{u},\hat{\mathcal{L}}\check{\varphi}\rangle=\hat{\mathcal{L}}(\xi)\check{\varphi}(\xi)=0$ for any $\varphi\in \mathcal{S}(\R^d)$.
\end{proof}

Operators of above type are called pseudodifferential operators with constant coefficients.
By choosing $\hat{\mathcal{L}}$ to be a polynomial, we see that the above lemma applies in particular to differential operators on $\R^d$ with constant coefficients.
The choice $\hat{\mathcal{L}}(\xi)=|\xi|^{s}$ with $s>0$ however, is not immediately included in the above lemma unless $s$ is an even integer, even not for the isotropic case $\mathfrak{s}=(1,\ldots,1)$.
Thus the corresponding operator, the fractional Laplacian $\mathcal{L}:=(-\Delta)^s$, is defined only on a subspace of $\mathcal{S}'(\R^d)$, for example on $L^1_s(\R^d):=\{u\in L^1(\R^d): \|\langle\cdot\rangle^{-(s+d)}u\|_{L^1(\R^d)}<\infty\}$ with $\langle x\rangle:=(1+|x|^2)^{\frac12}$ via $\langle \mathcal{L} u,\varphi\rangle:=\langle u,\mathcal{F}|\cdot|^s\check\varphi\rangle$ for all $\varphi\in\mathcal{S}(\R^d)$.
The following lemma is a slight generalization of Theorem 1.3 in \cite{CDL15}.
\begin{lemma}
Let $\mathfrak{s}=(1,\ldots,1)$ and $s>0$.
Then any $u\in L^1_s(\R^d)$ with $(-\Delta)^s u=0$ is a polynomial of order at most $s$.
\end{lemma}
\begin{proof}
 Let $\varphi\in C^\infty_c(\R^d\setminus\{0\})$ and write $\hat{\mathcal{L}}(\xi):=|\xi|^s$.
 Then $\psi:=\mathcal{F}\hat{\mathcal{L}}^{-1}\varphi\in \mathcal{S}(\R^d)$.
 Since $L^1_s(\R^d)\subset \mathcal{S}'(\R^d)$, it follows $\langle\hat u,\varphi\rangle=\langle\hat u,\hat{\mathcal{L}}\check\psi\rangle=\langle u,\mathcal{L}\psi\rangle=\langle\mathcal{L}u,\psi\rangle=0$.
 This shows $\supp\hat{u}\subset\{0\}$, which by Theorem 6.25 in \cite{Rud91} means that $\hat u$ is a finite linear combination of derivatives of the $\delta$-distribution, i.e., $u$ is a polynomial.
 Since the order of any polynomial in $L^1_s(\R^d)$ is at most $s$, this gives the result.
\end{proof}

\medskip

For the discrete case let $\eps>0$ and recall that a tempered distribution on $\Lambda_\epsilon:=\epsilon^{\mathfrak{s}_1}\Z\times\cdots\times \epsilon^{\mathfrak{s}_d}\Z$
is simply a function $u:\Lambda_\epsilon\to\C$ which has at most polynomial growth, see Chapter 7.1 in \cite{Sch66}.
We denote the space of tempered distributions on $\Lambda_\epsilon$ by $\mathcal{S}'(\Lambda_\epsilon)$,
and define the (invertible) discrete Fourier transform\footnote{Note that the Schwartz-Bruhat space $\mathcal{S}(\widehat{\Lambda}_\epsilon)$ of the Pontryagin dual group $\widehat{\Lambda}_\epsilon$ is given by $C^\infty(\widehat{\Lambda}_\epsilon)$, see \cite{Bru61}, and consequently the concept of tempered distributions coincides with the concept of distributions on $\widehat{\Lambda}_\epsilon$.}\\
\begin{minipage}{.1\textwidth}
\phantom{aaaa}
\end{minipage}
\begin{minipage}{.4\textwidth}
\begin{align*}
    \mathcal{F}_{\Lambda_\epsilon}&: \mathcal{S}'(\Lambda_\epsilon)\to \mathcal{D}'(\widehat{\Lambda}_\epsilon),\\
    \mathcal{F}_{\Lambda_\epsilon}&u:=\hat u:=(2\pi)^{-d}\epsilon^{\sum_{j=1}^d \mathfrak{s}_j}\sum_{k\in\Lambda_\epsilon}u(k)e_{-k},
\end{align*}
\end{minipage}
\begin{minipage}{.5\textwidth}
\begin{align*}
    \mathcal{F}_{\Lambda_\epsilon}^{-1}&: \mathcal{D}'(\widehat{\Lambda}_\epsilon)\to \mathcal{S}'(\Lambda_\epsilon),\\
    \mathcal{F}_{\Lambda_\epsilon}^{-1}&T(k):=\langle T,e_{k}\rangle_{\widehat{\Lambda}_\epsilon},
\end{align*}
\end{minipage}
where $e_k\in C^\infty(\widehat{\Lambda}_\epsilon)$ is defined by $e_k(\theta):=e^{i\theta\cdot k}$.
Here we identify $v\in L^1_{\mathrm{loc}}(\widehat{\Lambda}_\epsilon)$ with $v\in \mathcal{D}'(\widehat{\Lambda}_\epsilon)$ defined by\footnote{Here $\int_{\widehat{\Lambda}_\epsilon} \varphi\,\mathrm{d}\theta:=\int_{-\pi\epsilon^{-\mathfrak{s}_d}}^{\pi\epsilon^{-\mathfrak{s}_d}}\cdots \int_{-\pi\epsilon^{-\mathfrak{s}_1}}^{\pi\epsilon^{-\mathfrak{s}_1}} \varphi\,\mathrm{d}\theta_1 \cdots\mathrm{d}\theta_d$, in particular $\int_{\widehat{\Lambda}_\epsilon} \,\mathrm{d}\theta=(2\pi)^d\epsilon^{-\sum_{j=1}^d\mathfrak{s}_j}$.} 
$\langle v,\varphi\rangle_{\widehat{\Lambda}_\epsilon}:=\int_{\widehat{\Lambda}_\epsilon}v\varphi \, \mathrm{d} \theta$, and for $a,\varphi\in C^\infty(\widehat{\Lambda}_\epsilon)$ and $T\in \mathcal{D}'(\widehat{\Lambda}_\epsilon)$ we define multiplication by $\langle aT,\varphi\rangle_{\widehat{\Lambda}_\epsilon}:=\langle T, a\varphi\rangle_{\widehat{\Lambda}_\epsilon}$.
Observe that for $\theta\in \widehat{\Lambda}_\epsilon$, the inverse Fourier transform of derivatives of the delta distribution $\delta_\theta\in \mathcal{D}'(\widehat{\Lambda}_\epsilon)$ defined via $\langle \delta_\theta,\varphi\rangle_{\widehat{\Lambda}_\epsilon}:=\varphi(\theta)$ is given by $\mathcal{F}_{\Lambda_\epsilon}^{-1}D^\gamma\delta_\theta(k)=(-ik)^\gamma e^{i\theta\cdot k}$.
Moreover, with the notation of Section \ref{sec:discrete_el} we have
\begin{align*}
 \widehat{D_\epsilon^\gamma u}=(i\xi_{\cdot}^\epsilon)^\gamma \hat u \quad \text{and} \quad \widehat{\overline{D}{}_\epsilon^\delta u}= (i\overline{\xi_{\cdot}^\epsilon})^\delta \hat u;
\end{align*}
hence the following lemma is applicable to difference operators as discussed in Section \ref{sec:discrete_el}.
\begin{lemma}\label{js1000}
 Let $\epsilon >0$ and suppose that $\hat{\mathcal{L}_\epsilon}\in C^\infty(\widehat{\Lambda}_\epsilon)$.
 For $u\in \mathcal{S}'(\Lambda_\epsilon)$ we define $\mathcal{L}_\epsilon u:=\mathcal{F}_{\Lambda_\epsilon}^{-1}[\hat{\mathcal{L}}_\epsilon \hat u]$.
 The following are equivalent:
 \begin{enumerate}
     \item\label{js1000i} $\hat{\mathcal{L}_\epsilon}(\theta)\ne 0$ for all $\theta\in\widehat{\Lambda}_\epsilon\setminus\{0\}$.
     \item\label{js1000ii} If $u\in\mathcal{S}'(\Lambda_\epsilon)$ satisfies $\mathcal{L}_\epsilon u=0$, then $u$ is a polynomial restricted to $\Lambda_\epsilon$.
     \item\label{js1000iii} For all $\eta\ge 0$ and $u:\Lambda_\epsilon\to \C$ with $\mathcal{L}_\epsilon u=0$ and $\|u\|_{B_R(0)}\le CR^\eta$ for some $C\ge 0$ and all $R\ge 1$, it holds that $u$ is a polynomial of anisotropic order at most $\eta$.
     \item\label{js1000iv} For $\theta\in \widehat{\Lambda}_\epsilon$, the function $u:\Lambda_\epsilon\to \C$ defined by $u(k):=e^{i\theta\cdot k}$ satisfies $\mathcal{L}_\epsilon u=0$ only if $\theta=0$.
 \end{enumerate}
 Moreover $\hat{\mathcal{L}}_\epsilon(\theta)\ne 0$ for all $\theta\in \widehat{\Lambda}_\epsilon$ if and only if $u=0$ whenever $u\in \mathcal{S}'(\Lambda_\epsilon)$ satisfies $\mathcal{L}_\epsilon u=0$.
\end{lemma}
\begin{proof}
 \ref{js1000i}$\Rightarrow$\ref{js1000ii}:
 Let $\varphi\in C^\infty_c(\widehat{\Lambda}_\epsilon\setminus\{0\})$.
 Then $\psi:=\varphi/\hat{\mathcal{L}}_\epsilon\in C^\infty(\widehat{\Lambda}_\epsilon)$, and thus $\langle\hat u,\varphi\rangle_{\widehat{\Lambda}_\epsilon}=\langle\hat u,\hat{\mathcal{L}}_\epsilon\psi\rangle_{\widehat{\Lambda}_\epsilon}=\langle\hat{\mathcal{L}}_\epsilon\hat{u},\psi\rangle_{\widehat{\Lambda}_\epsilon}=\langle[\mathcal{L}_\epsilon u]\widehat{\phantom{a}},\psi\rangle_{\widehat{\Lambda}_\epsilon}=0$.
 This shows $\supp\hat{u}\subset \{0\}$.
 By Chapter 7.1 in \cite{Sch66} and Theorem 6.25 in \cite{Rud91}, we find that $\hat u$ is a finite linear combination of derivatives of the $\delta_0$-distribution, and hence by Fourier inversion
 $u$ is a polynomial restricted to $\Lambda_\epsilon$.
 If $\hat{\mathcal{L}}_\epsilon(\theta)\ne 0$ for all $\theta\in \widehat{\Lambda}_\epsilon$, then the same argument with $\varphi\in C^\infty(\widehat{\Lambda}_\epsilon)$ shows that $u=0$.

 \medskip

 \ref{js1000ii}$\Rightarrow$\ref{js1000iii}: Any polynomially bounded function $u:\Lambda_\epsilon\to\C$ is a tempered distribution, and the anisotropic order of a polynomial with $\|u\|_{B_R(0)}\le CR^\eta$ for all $R\ge 1$ is at most $\eta$.

  \medskip

  \ref{js1000iii}$\Rightarrow$\ref{js1000iv}: Follows by taking $\eta=0$, since $u(k)=e^{i\theta\cdot k}$ is bounded and non-constant for $\theta\in\widehat{\Lambda}_\epsilon\setminus\{0\}$.

  \medskip
 
 \ref{js1000iv}$\Rightarrow$\ref{js1000i}: 
 If $\theta\in \widehat{\Lambda}_\epsilon$ fulfills $\hat{\mathcal{L}}_\epsilon(\theta)=0$, then $u(k):=e^{i \theta\cdot k}$ solves $\mathcal{L}_\epsilon u=0$.
 Indeed, since $\hat{u}=\delta_{\theta}$, we have for any $k\in \Lambda_\epsilon$ that $\mathcal{L}_\epsilon u(k)=\langle\hat{\mathcal{L}}_\epsilon \hat{u},e_k\rangle_{\widehat{\Lambda}_\epsilon}=\langle\hat{u},\hat{\mathcal{L}_\epsilon} e_k\rangle_{\widehat{\Lambda}_\epsilon}=\hat{\mathcal{L}_\epsilon}(\theta)e^{i\theta\cdot k}=0$.
\end{proof}

\bibliographystyle{plain}

\end{document}